\numberwithin{equation}{section}
\numberwithin{equation}{section}
\newtheorem{thrm}{Theorem}[section]
\newtheorem{lemma}[thrm]{Lemma}
\newcommand{\Vol}{\mathop{{\rm Vol}}}
\newcommand{\Ric}{\mathop{{\rm Ric}}}
\newcommand{\id}{\mathop{{\rm id}}}
\newcommand{\Tr}{\mathop{{\rm tr}}}
\newcommand{\Qpf}{\mathop{{\rm {\mathcal{P}}}}}
\newcommand{\lap}{\triangle}
\def\gr{\nabla f}
\begin{document}

\begin{abstract}
We prove an Obata type rigidity result for the first eigenvalue of the sub-Laplacian on a compact seven dimensional quaternionic contact manifold which satisfies a Lichnerowicz-type  bound on its quaternionic contact Ricci curvature and has a non-negative Paneitz  P-function. In particular, under the stated conditions, the lowest possible eigenvalue of the sub-Laplacian is achieved if and only if the manifold is qc-equivalent to the standard 3-Sasakian sphere.
\end{abstract}

\keywords{sub-Riemannian geometry, CR and quaternionic contact structures, Sobolev inequality, Yamabe equation, Lichnerowicz eigenvalue estimate, Obata theorem}
\subjclass{58J50, 53C24, 53C17, 32V20}
\title[The  Obata  first eigenvalue]{The  Obata  first eigenvalue  theorem on a seven dimensional quaternionic contact manifold}
\date{\today }
\author{Abdelrahman Mohamed}
\address[Abdelrahman Mohamed]{Department of Mathematics\\
University of North Alabama\\
Florence, Alabama, 35630-0002\\
}
\email{amohamed2@una.edu}

\author{Dimiter Vassilev}
\address[Dimiter Vassilev]{ Department of Mathematics and Statistics\\
University of New Mexico\\
Albuquerque, New Mexico, 87131-0001\\
}
\email{vassilev@unm.edu}
\maketitle
\tableofcontents

%\date{}

\setcounter{tocdepth}{2}
\section{Introduction}

 \subsection{The qc Obata first eigenvalue theorems in dimension seven}\label{ss:QC Lich and Obata}

 The goal of this article is to prove an Obata type Theorem \ref{t:mainpan} for  seven dimensional compact quaternionic contact (abbr. qc) manifolds assuming a Lichnerowicz-type qc-Ricci lower curvature bound, see Section \ref{ss:Obata hist} for the classical Riemannian result.  We use the connection and torsion tensors of the  Biquard connection defined in dimension seven by Duchemin  \cite{D}. The associated sub-Laplacian is a sub-elliptic operator, hence  on a compact qc manifold it has a  discrete  spectrum and all eigenfunctions are smooth functions. The basic notions, tensors and notations of the relevant  qc geometry are recalled in Section \ref{ss:qc strs}. In the following theorem $S$ and $T^0$ are, respectively, the normalized qc-scalar curvature and  torsion of the Biquard connection $\nabla$. The horizontal bundle $H$ is the kernel of the $\mathbb{R}^3$ valued 1-form $\eta$ defining the qc structure. Throughout the text, we shall use the non-negative sub-Laplacian, $\lap u=-\Tr^g(\nabla^2 u)$.

\begin{thrm}\label{t:mainpan} Let $(M,\eta)$ be a closed compact QC manifold of dimension seven and $g$ be the horizontal metric. Suppose the following qc-Ricci curvature lower bound holds true
\begin{equation}\label{e:Lich}
{\mathcal{L}}(X,X)\overset{def}{=} 2Sg(X,X) + \frac{10}{3}T^0(X,X) \geq 4g(X,X), \qquad X\in\Gamma(H),
\end{equation}
and the $P$-function of any eigenfunction associated to the first non-zero eigenvalue of the sub-Laplacian is non-negative.  If the (lowest) eigenvalue of the sub-Laplacian is $4$, then $(M,\eta)$ is qc-equivalent to the standard $3$-Sasakian sphere.
\end{thrm}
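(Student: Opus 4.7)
The plan is to adapt the $P$-function/Bochner technique, standard in CR and higher-dimensional qc Obata theorems, to the seven-dimensional qc setting where the Biquard connection of Duchemin and the associated torsion tensors require a separate treatment. The argument naturally splits into: (i) an integrated Bochner--Lichnerowicz identity whose sign is controlled by the two hypotheses, (ii) a pointwise analysis of the equality case, and (iii) identification with the $3$-Sasakian sphere via the qc-Einstein classification.

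For (i), let $f$ be an eigenfunction with $\Delta f = 4f$. I would establish, by integrating by parts the qc-Bochner formula for $|\nabla f|^2$ and using the commutator identities for $\nabla$ in dimension seven, an identity of the schematic form
\begin{equation}
\int_M (\Delta f)^2\, dV \;=\; \int_M \Big(\|\nabla^2 f - A(f)\|^2 + \mathcal L(\nabla f,\nabla f) + c\, P(f)\Big)\, dV,
\end{equation}
where $A(f)$ is a tensor algebraically determined by $f$ and the qc structure, $c>0$ is an explicit constant, and the integrand on the right is pointwise non-negative under the hypotheses. Since $\int_M (\Delta f)^2 = 4\int_M |\nabla f|^2$ while the curvature assumption gives $\mathcal L(\nabla f,\nabla f)\geq 4|\nabla f|^2$ and $P(f)\geq 0$, each non-negative piece must vanish identically. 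This yields a qc-Obata equation $\nabla^2 f = A(f)$ pointwise, together with $T^0(\nabla f,\nabla f)\equiv 0$ on $M$.

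For (ii), differentiating the pointwise Obata equation and invoking the qc-Bianchi identities should produce $T^0\equiv 0$ on the open set $\{|\nabla f|\neq 0\}$; unique continuation for the sub-Laplacian extends the vanishing globally, making $(M,\eta)$ qc-Einstein. The curvature bound then reads $2Sg\geq 4g$, so the qc-scalar curvature is a positive constant, and step (iii) invokes the known classification of compact qc-Einstein manifolds with positive qc-scalar curvature to identify $(M,\eta)$ with the standard $3$-Sasakian $S^7$ up to qc-equivalence. The principal obstacle is step (i): the precise form of the Bochner--$P$-function identity in dimension seven cannot simply be imported from the $\dim\geq 11$ theory because the horizontal bundle has rank $4$ and several representation-theoretic simplifications for $\mathrm{Sp}(1)$-modules collapse, so that the coefficient $10/3$ and the definition of $P(f)$ must be verified by a direct seven-dimensional computation using the Duchemin connection; a secondary difficulty is showing that the combined vanishing in the equality case pins down $T^0$ itself rather than only its contraction against $\nabla f$.
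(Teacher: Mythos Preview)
Your step (i) is essentially correct and matches what the paper imports from \cite{IPV2}: the integrated Bochner identity together with the two non-negativity hypotheses forces the Hessian equation $\nabla^2 f(Y,X)=-fg(Y,X)-\sum_s f_s\omega_s(Y,X)$ and the pointwise equality $\mathcal L(\nabla f,\nabla f)=4|\nabla f|^2$. Note, however, that the latter does \emph{not} give $T^0(\nabla f,\nabla f)=0$; it gives $T^0(\nabla f,\nabla f)=-\tfrac{3}{5}(S-2)|\nabla f|^2$, and $S$ is at this stage only known to satisfy $S\ge 2$, not $S=2$.

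The real gap is your step (ii). You write that ``differentiating the pointwise Obata equation and invoking the qc-Bianchi identities should produce $T^0\equiv 0$''. In dimension seven this is precisely what does \emph{not} work, and the paper is explicit about why: the torsion component $U$ vanishes identically when $n=1$, so the higher-dimensional formulae expressing $T^0$ and $U$ in terms of $U(\nabla f,\nabla f)$ collapse, and the contracted second Bianchi identity becomes tautological and yields no constraint. What you flag as a ``secondary difficulty'' --- pinning down $T^0$ itself rather than only its contraction against $\nabla f$ --- is in fact the entire content of the paper. Their route is: (a) use the non-negativity of the quadratic form $\mathcal Q=2(\mathcal L-4g)$ and the vanishing of its integral to show the full $P$-form $P_f(X)=\mathcal Q(X,\nabla f)$ vanishes, hence $T^0(X,\nabla f)=-\tfrac35(S-2)df(X)$; (b) derive a third-order identity (their Lemma~2.4) by computing $\nabla^3 f(Y,X,\xi_i)$ in two different ways via Ricci identities and the curvature formula $R(\xi_i,Y,X,Z)$; (c) use this to prove that $S$ is constant, in fact $S=2$, which is itself nontrivial and requires computing the horizontal Hessian of $S$; and (d) carry out a delicate component-by-component analysis in the frame $\{I_\alpha\nabla f/|\nabla f|\}$, extracting algebraic relations among the six components $T_{\alpha\beta}=T^0(I_\alpha\nabla f,I_\beta\nabla f)$ until one can show $f|\nabla f|^4|T^0|^2=0$. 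None of this is captured by ``Bianchi identities'', and your outline gives no indication of how you would accomplish (b)--(d).
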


Following \cite{IPV2}, but working only in dimension seven, for a fixed smooth function $f$ on a seven dimensional qc manifold we define a one form $ P_f$ on $M$, which we call the $P-$form of $f$, by the following equation
\begin{equation}\label{e:Pfn}
P_f(X) =\sum_{b=1}^{4}\nabla ^{3}f(X,e_{b},e_{b})+\sum_{t=1}^{3}\sum_{b=1}^{4}\nabla
^{3}f(I_{t}X,e_{b},I_{t}e_{b})
-4Sdf(X)+4T^{0}(X,\nabla f).
\end{equation}
The $P-$function of $f$ is the function $P_f(\nabla f)$, which is called  non-negative if
\begin{equation}\label{e:Pfn positivity}
-\int_M P_f(\nabla f)\, Vol_{\eta}\geq 0,%\int_M f\cdot Cf \, Vol_{\eta}=
\end{equation}
where the volume form is ${\Vol}_\eta = \eta_1 \wedge \eta_2 \wedge \eta_3 \wedge {\Omega}$ with $\Omega=\sum_{s=1}^3\omega_s\wedge\omega_s$, the so called fundamental 4-form of the qc-structure.   We note that it was proven in \cite[Proposition 3.4]{IPV2} that on a locally 3-Sasakian manifold the $P-$function of any {eigenfunction} of the sub-Laplacian is non-negative.

In order to recognize the theorem as an Obata type result we recall the Lichnerowicz estimate of the first eigenvalue established in \cite{IPV1} in dimensions greater than seven and in \cite{IPV2}  in the seven
dimensional case.  Let $(M,\eta)$ be a compact QC manifold of dimension $4n+3$,
\[
\alpha_n=\frac {2(2n+3)}{2n+1}, \qquad \beta_n=\frac {4(2n-1)(n+2)}{(2n+1)(n-1)}, \quad\text{with}\quad \beta_1=0,
\]
and $T^0$ and $U$ be the components of the torsion tensor, which up to multiplicative constants, are the traceless $Sp(n)Sp(1)$ invariant components of the qc-Ricci tensor  \cite{IMV}, see Section \ref{ss:qc strs}.
Suppose that  for any horizontal vector field $ X\in H$ the following Lichnerowcz-type bound holds true,
$$\mathcal{L}(X,X)\overset{def}{=}2 Sg(X,X)+\alpha_n T^0(X,X) +\beta_n U(X,X)\geq 4g(X,X).$$
In the case $n=1$ assume, in addition,  the positivity of
the $P$-function  of any eigenfunction, cf. \eqref{e:Pfn positivity}. Then, any eigenvalue $\lambda$ of the sub-Laplacian $\triangle$ satisfies the inequality $\lambda \ge 4n.$  A small calculation shows, see \cite{IMV2},
\cite{ACD} and also \cite[Theorem 4.1]{IV15},  that the  3-Sasakian sphere achieves equality in the bottom of spectrum inequality. In fact, on the 3-Sasakian sphere the eigenspace of the sub-Laplacian  with eigenvalue $4n$ is given by the restrictions to the sphere of all linear functions in Euclidean space.

The  rigidity result when the dimension of the qc manifold is at least eleven, i.e.,  the Obata type theorem characterizing the 3-Sasakian sphere as the only case in which the lowest possible eigenvalue is achieved was proven in \cite{IPV3}. In fact, \cite{IPV3} established a general result valid on any complete with respect to the associated Riemannian metric qc manifold, characterizing the 3-Sasakian sphere of dimension at least eleven through the existence of an eigenfunction whose Hessian with respect to the Biquard connection \cite{Biq1} is in the space generated by the metric and fundamental 2-forms of the quaternionic contact structure. In the seven dimensional case, the claim of Theorem \ref{t:mainpan} was proven in \cite[Corrolary 1.2]{IPV2} assuming in addition to the stated hypotheses  that the torsion tensor $T^0=0$ and that the qc-scalar curvature is constant. The latter condition was shown to be redundant in \cite{IMV3} since it follows from the vanishing of the torsion. The general qc Obata result in dimension seven remained open which motivated the current article where we prove that the vanishing of the torsion is implied from the stated conditions in Theorem \ref{t:mainpan}.

In view of the results of \cite{IPV2} and \cite{IMV3}  mentioned in the preceding paragraph,  the proof of Theorem \ref{t:mainpan} amounts to showing that the torsion tensor vanishes. The seven dimensional case  result differs from the proof of the higher dimensional version \cite{IPV3} in several aspects. First,  we begin by using  the compactness of the manifold to obtain the vanishing of the P-form of any eigenfunction $f$ with eigenvalue achieving the lowest possible value 4. In fact, we interpret  the  Lichnerowicz' condition as a non-negativity of a certain quadratic form $\Qpf$, cf.  \eqref{e:Qpf}, related to the $P$-form \eqref{e:Pfn} of $f$, which together with the assumed positivity of the $P-$function of $f$ shows that the horizontal gradient of $f$ is in the kernel of $\Qpf$.   This allows one to see fairly easily that certain components of the torsion tensor vanish. The vanishing of  $\Qpf$ is equivalent to the vanishing of the torsion tensor. This brings about another substantial difference between the $n=1$ and $n>1$ cases. In the higher dimensional case  \cite[Lemma 3.8]{IPV3}, one sees that the two $Sp(n)Sp(1)$ invariant torsion tensors $T^0$ and $U$, which determine the full torsion and the traceless part of the qc-Ricci tensor, can be expressed in terms of the function $U(\nabla f,\nabla f)$ and the  horizontal gradient of $f$ as follows
\begin{equation}\label{t000}
|\nabla f|^{4}T^{0}(X,Y)=-\frac{2n}{n-1}U(\nabla f,\nabla f)\Big[%
3df(X)df(Y)-\sum_{s=1}^{3}df(I_{s}X)df(I_{s}Y)\Big],
\end{equation}%
\begin{equation}\label{u000}
|\nabla f|^{4}U(X,Y)=-\frac{1}{n-1}U(\nabla f,\nabla f)\Big[|\nabla
f|^{2}g(X,Y)-n\Big(df(X)df(Y)+\sum_{s=1}^{3}df(I_{s}X)df(I_{s}Y)\Big)\Big].
\end{equation}
  Thus, in the higher dimensional case, the crux of the matter  in showing that the torsion vanishes is the proof that $U(\gr,\gr)=0$, which is achieved with the help of the Ricci identities and the contracted Bianchi second identity. In dimension seven, the tensor $U$ vanishes trivially and there is no substitute for \eqref{t000}.  In addition, the contracted Bianchi second identity brings no constraints since it holds trivially. We remark that this is one of the reasons for the difficulties in showing that a  qc-Einstein seven dimensional manifold is of constant scalar curvature, which was shown in \cite{IMV3}. In our case, one of the crucial steps turns out to be also that the qc scalar curvature is constant, see \ref{ss:S is const}.

We should mention that in the proofs of the sub-Riemannian versions of the rigidity result eventually one resorts to using the Riemannian Obata theorem to obtain an isometric equivalence to the round Euclidean sphere after which one argues that the equivalence extends to the additional quaternionic contact or CR structures in the case of the CR version. A more detailed review and references  of the corresponding problems, including the geometric interpretation of the functions realizing the equality cases and the  role played by the rigidity result in the uniqueness of qc and CR Yamabe metrics within a fixed qc or, in the CR case, pseudohermitian conformal class can be found in \cite{IV15} and \cite{IMV15a}. In the next few paragraphs we give only a brief background.

\subsection{Riemannian and K\"ahler cases}\label{ss:Obata hist}
  Lichnerowicz \cite{Li}  showed
that on a compact Riemannian manifold $(M,g)$ of dimension $n$ for which the
Ricci curvature  satisfies $%\begin{equation}\label{ob1}
\Ric(X,X)\geq (n-1)g(X,X) $  the first positive eigenvalue $\lambda_1$ of the (positive) Laplace operator $\lap f=-\Tr^g\nabla^2f$
satisfies the inequality $%
\lambda_1\geq n$ where $\nabla$ is the Levi-Civita connection of $g$.
Subsequently, Obata \cite{O3} proved that equality is
achieved if and only if the Riemannian manifold is isometric to the round unit sphere with eigenfunctions given by the spherical harmonics of degree one. Obata's rigidity theorem  was preceded by several results where the case of equality was characterized under the additional assumption that $g$ is Einstein \cite{YaNa59} or has constant scalar curvature \cite{IshTa59}, see also \cite[Proposition 2.4]{IV15} where it is shown that the metric is Einstein automatically.

In the K\"ahler case,  Lichnerowicz showed an
improvement of the first eigenvalue inequality by showing that if $ M$ is a closed K\"ahler manifold with $\Ric \ge k$, then the first non-zero eigenvalue satisfies $\lambda_1 \ge 2k$. Furthermore, equality implies that the gradient field of any eigenfunction for $\lambda_1$ is a (non-trivial) real holomorphic vector field.

\subsection{The CR case}
From the sub-ellipticity of the sub-Laplacian on a strictly pseudoconvex CR manifold it follows that on a compact manifold its spectrum is
discrete. A CR analogue of the Lichnerowicz theorem was found by
Greenleaf \cite{Gr} for dimensions $2n+1>5$,  while the corresponding results for  $n=2$ and  $n=1$ were achieved later in \cite{LL}  and
\cite{Chi06}, respectively. In this case, the standard Sasakian unit sphere has first eigenvalue
equal to 2n with eigenspace spanned by the restrictions of
all linear functions to the sphere. Up to scaling it provides the equality case in the bottom of the spectrum estimate.   As far as the Obata type result is concerned, the most general result valid on a complete CR manifold was proven in \cite{IVO} under the assumption of a  divergence-free  pseudohermitian torsion. In the compact case, \cite{LW,LW1,IV3} proved the Obata type theorem on  a compact strictly pseudoconvex pseudohemitian manifold which satisfies the Lichnerowicz-type bound. In dimension three, in all of the above results it is assumed that  the CR-Paneitz operator is non-negative.

In \cite{ChChY12} and \cite{LSW15}, it was proven an analogue of the Lichnerowicz eigenvalue estimate for the Kohn Laplacian assuming that the CR Paneitz operator is non-negative in dimension three. Furthermore, \cite{LSW15} and \cite{CY2} established the corresponding rigidity results. We mention that a big part of  the difficulty in  the three dimensional case stems from the possible difference between the kernel of the CR Paneitz operator and the space of CR pluriharmonic functions.

\subsection{Conventions}\label{Conv}
\begin{enumerate}[label=\alph*)]
\item We shall use $X,Y,Z$ to denote horizontal vector fields, i.e., $X,Y,Z\in\Gamma(H)$.
\item The triple $(i,j,k)$ denotes any cyclic permutation of $(1, 2, 3)$.
\item The sum $\sum_{(ijk)}$ means the cyclic sum. For example,
$$T_{11}^2 + T_{22}^2 + T_{33}^2 + 2T_{12}^2 + 2T_{23}^2 + 2T_{31}^2
=\sum_{(i\,j\,k)} [T_{ii}^2 + 2T_{ij}^2]=\sum_{(i\,j\,k)} [T_{ii}^2 + 2T_{jk}^2].$$
\item In general,  $s$ will be any number from the set $\{1, 2, 3\}$.%, $\,\, s\in\{1, 2, 3\}$.
\item Greek indices $\alpha$, $\beta$ etc., will be in the set $\{0,1,2,3\}$.
\item $\{{e_\gamma}\}_{{\gamma=0}}^{3}$ denotes a local orthonormal basis of the horizontal space $H$.
\item The summation convention over repeated vectors from the basis  $\{{e_\gamma}\}_{{a=0}}^{3}$ will be used. For example, for a $(0,4)$-tensor $P$, the formula $k = P({e_\beta}, {e_\gamma}, {e_\gamma}, {e_\beta})$ means $k = \sum_{\beta,\, \gamma =0}^{3}P({e_\beta}, {e_\gamma}, {e_\gamma}, {e_\beta})$.
\item The following divergences for $\alpha=0,1,2,3$ will be used
\begin{equation}\label{e:def of div}
\nabla^*T^0(X) \overset{def}{=} \nabla T^0({e_\gamma}, {e_\gamma}, X), \quad
\nabla_\alpha^* T^0(X) \overset{def}{=} \nabla T^0({e_\gamma}, I_\alpha {e_\gamma}, X)
\end{equation}
where $I_0$ is the identity on the horizontal space $H$.
\end{enumerate}

\textbf{Acknowledgements} The second author would like to thank  Stefan Ivanov and Alexander Petkov  for many discussions in our earlier joint attempt to prove the result of this paper, which lead to Lemma \ref{e:old}. D.V was partially supported by ARPA-E contract number DE-AR0001202. A.M was partially supported by the Efroymson Foundation at UNM.

\section{Proof of the main Theorem}
\subsection{Quaternionic contact structures and the Biquard connection.}\label{ss:qc strs}

In this section we set the notation and briefly review the necessary notions of (seven dimensional) quaternionic contact geometry \cite{Biq1}, \cite{D}, \cite{IMV} and \cite{IV2}. We will consider a seven dimensional integrable quaternionic contact (qc) manifold $(M,\eta,g,\mathbb{Q})$, see \cite{D}. Thus, we have a seven dimensional manifold $M$ equipped with an oriented  codimension three sub-bundle $H$ of the tangent bundle, such that $H$ is locally given as the kernel of a $1$-form $\eta = (\eta_1, \eta_2,\eta_3)$ with values in $\mathbb{R}^3$. In addition $H$ has an $Sp(1)Sp(1)$ structure, that is, it is equipped with a Riemannian metric $g$ and a rank-three bundle $\mathbb{Q}$ consisting of endomorphisms of $H$ locally generated by three almost-complex structures $I_1, I_2, I_3$ on $H$ satisfying the identities of the imaginary unit quaternions, $I_1 I_2 = - I_2 I_1 = I_3$ and $I_1I_2I_3 = - \id_H$, which are Hermitian compatible with the metric $g(I_s\cdot, I_s \cdot) = g(\cdot, \cdot)$ and the following compatibility condition holds,  $$2g(I_s X, Y) = d\eta_s(X,Y), \qquad X,Y\in H.$$ In addition, we assume that the qc-structure is integrable, i.e.,  there exist three smooth vector fields $\{\xi_1, \xi_2, \xi_3\}$, called Reeb vector fields, which satisfy the identities
\begin{equation}\label{etaxi}
\eta_s(\xi_k) = \delta_{sk}, \qquad (\xi_s \lrcorner d\eta_s)|_H = 0, \qquad (\xi_s \lrcorner d\eta_k)|_H = -(\xi_k \lrcorner d\eta_s)|_H.
\end{equation}
The Reeb vector fields define the so called vertical space, which will be denoted by $V$.  Using the triple of Reeb vector fields we extend the ''horizontal'' metric $g$ on $H$ to a metric $h$ on $TM$ by requiring that the Reeb vector fields are an orthonormal frame for the vertical space $V$, $h(\xi_s, \xi_t) = \delta_{st}$. Thus, with a slight abuse of notation, we have
\begin{equation}\label{e:h metric}
h=g+\sum_{s=1}^3 \eta_s^2.
\end{equation}
The Reimannian metric $h$ as well as the canonical connection do not depend of the action of $SO(3)$ on $V$, but both change if $\eta$ is multiplied by a conformal factor \cite{IMV}.
The fundamental $2$-forms ${\omega_s}$ of the quaternionic structure $\mathbb{Q}$ are defined by
\begin{equation}\label{fundforms}
2{\omega_s}|_H = d\eta_s|_H, \qquad \xi\lrcorner{\omega_s} = 0, \,\, \xi\in V.
\end{equation}

We will denote by $\nabla$ the ''canonical'' Biquard connection, which in the considered seven dimensional case was  defined by Duchemin \cite{D}, but we will use the conventions adopted by Biquard  \cite{Biq1}. The two connections differ only in the derivatives of type $\nabla_{\xi}\xi'$ for $\xi, \xi'\in V$. The torsion of the connection $\nabla$ will be denoted by $T$. In particular we have that
 $\nabla$ preserves the decomposition $H\oplus V$ and the $Sp(1)Sp(1)$ structure on $H$, i.e., $\nabla g = 0$, $\nabla\sigma \in\Gamma(\mathbb{Q})$ for a section $\sigma\in\Gamma(\mathbb{Q})$, and its torsion on $H$ is given by $T(X,Y) = -[X,Y]|_V$. Furthermore, for  a vertical vector $\xi\in V$, the endomorphism $T(\xi,\cdot)|_H$ of $H$ lies in $(sp(1)\oplus sp(1))^\perp \subset gl(4)$. Finally, the connection preserves the bundle  of self-dual 2-forms on $H$ with respect to the conformal class defined by $g$ and we have
\begin{equation}\label{e:Biq}
\nabla I_i = -\alpha_j \otimes I_k + \alpha_k \otimes I_j, \qquad \nabla \xi_i = -\alpha_j \otimes \xi_k + \alpha_k \otimes \xi_j.
\end{equation}
The vanishing of  the $sp(1)$-connection $1$-forms on $H$ implies the vanishing of the torsion endomorphism of the canonical connection, see \cite{IMV}.  Clearly, the canonical connection preserves the Riemannian metric on $TM$, i.e., $\nabla h = 0$.

Due to \eqref{fundforms}, the torsion restricted to $H$ has the form
\begin{equation}
T(X,Y) = -[X,Y]|_V =2\omega_1(X,Y)\xi_1 + 2\omega_2(X,Y)\xi_2 + 2\omega_3(X,Y)\xi_3.
\end{equation}
On the other hand, in the seven dimensional case,  the endomorphism $T_\xi=T(\xi,.) \in (sp(1) \oplus sp(1))^\perp$ is symmetric and completely trace-free, $$\Tr T_\xi = \Tr T_\xi \circ I_s = 0,\qquad T_\xi =T_{\xi}^0,$$ where following tradition, and to be consistent with the higher dimensional case, we denote with $T_{\xi}^0$ the symmetric part of the endomorphism $T_\xi$. The  following $Sp(1)Sp(1)$-invariant trace-free symmetric $2$-tensor  was introduced in \cite{IMV},
\begin{equation}\label{e:T0}
T^0(X,Y) = g((T_{\xi_1}^0I_1 +T_{\xi_2}^0I_2 + T_{\xi_3}^0I_3)X,Y), \qquad X,Y\in\Gamma(H).
\end{equation}
The tensor $T^0$ belongs to the $-1$ eigenspace   of the Casimir operator $\Upsilon = I_1 \otimes I_1 + I_2 \otimes I_2 + I_3 \otimes I_3$ which satisfies $(\Upsilon -3I)(\Upsilon +I)$, i.e.,
\begin{equation}\label{e:Tss}
T^0(X,Y) + T^0(I_1 X, I_1 Y) + T^0(I_2 X, I_2 Y) + T^0(I_3 X, I_3 Y)= 0.
\end{equation}
Furthermore, in dimension seven, the torsion is determined by $T^0$ through the following formula,  \cite[Proposition 2.3]{IV} or \cite[Lemma 4.2.6]{IV2},
\begin{equation}\label{TorT0}
T(\xi_s, I_s X, Y) = T^0(\xi_s, I_s X, Y)  = \frac{1}{4}[T^0(X,Y) - T^0(I_s X, I_s Y)] .
\end{equation}
Up to a multiplicative constant, $T^0$ gives the trace-free part of the qc-Ricci tensor defined below, see \eqref{e:2.10}.

We let $R = [\nabla, \nabla] - \nabla_{[\,\, ,\,\,]}$ be the curvature tensor of $\nabla$. We shall also use $R$ and $T$ to denote, correspondingly, the curvature tensor of type $(0,4)$ and the torsion tensor of type $(0,3)$, \[
R(A,B,C,D)=h(R(A,B)C,D),\quad T(A,B,C)= h(T(A,B),C),\quad A,B,C,D\in\Gamma(TM).
\]
The \emph{qc-Ricci} tensor $\Ric$, the \emph{normalized qc-scalar curvature} $S$, the \emph{qc-Ricci} $2$-forms $\rho_s$, and the \emph{qc-Ricci}-type tensors $\zeta_s$ are defined by
\begin{equation}\label{qc-Ric}
\begin{aligned}
\Ric(A,B) = R({e_\beta}, A, B, {e_\beta}), \qquad S = \frac {1}{24}R({e_\beta}, {e_\gamma}, {e_\gamma}, {e_\beta}), \\
\rho_s(A,B) = \frac{1}{4} R(A,B, {e_\gamma}, I_s {e_\gamma}), \qquad \zeta_s(A,B) = \frac{1}{4} R({e_\gamma}, A, B, I_s {e_\gamma}).
\end{aligned}
\end{equation}

The qc-Ricci tensor can be expressed in terms of the torsion of the Biquard connection \cite{IMV}, see also \cite{IMV1, IV}. Furthermore, we have the following identities valid when $n=1$, see \cite[Theorem $1.3$, Theorem $3.12$, Corollary $3.14$, Proposition $4.3$ and Proposition $4.4$]{IMV},
\begin{equation}\label{e:2.10}
\begin{aligned}
&\Ric(X,Y) = 4T^0(X,Y)  + 6Sg(X,Y), \\
&\zeta_s(X,I_sY) = \frac{3}{4}T^0(X,Y) + \frac{1}{4}T^0(I_s X, I_s Y) + \frac{S}{2}g(X,Y), \\
&T(\xi_i, \xi_j) = -S\xi_k - [\xi_i, \xi_j]|_H,\\% \qquad S = -h(T(\xi_1, \xi_2), \xi_3), \\
&g(T(\xi_i, \xi_j), X) = -\rho_k(I_i X, \xi_i) = - \rho_k(I_j X, \xi_j) = -h([\xi_i, \xi_j],X).
\end{aligned}
\end{equation}

We recall that a qc-structure is called \emph{qc-Einstein} if the horizontal qc-Ricci tensor is a scalar multiple of the metric, $$\Ric(X,Y) = 6Sg(X,Y).$$  From the above formulas, it follows that the structure is  qc-Einstein if and only if $T^0=0$, which is equivalent to the vanishing of  the torsion endomorphism. In this case the normalized qc-scalar curvature $S$ is constant and the vertical distribution $V$ is integrable  \cite{IMV3}, see \cite{IMV} for $n>1$. If $S >0$ then the qc-manifold is locally $3$-Sasakian \cite{IMV}, see \cite{Fund4} for the negative qc-scalar curvature case.

 The (horizontal) divergence of a horizontal vector field/$1$-form $\sigma\in\Lambda^1(H)$, defined by $$\nabla^*\sigma = -{\Tr}|_H(\nabla\sigma) = -\nabla\sigma({e_\gamma}, {e_\gamma}).$$ We have the following "integration by parts" formula on a compact $M$ formula \cite{IMV}, see also \cite{YamP},
\begin{equation}\label{e:divthrm}
\int_M (\nabla^* \sigma) {\Vol}_\eta = 0.
\end{equation}
The sub-Laplacian $\Delta f$, and the norm of the horizontal gradient $\nabla f$, of a smooth function $f$ on $M$ are defined, respectively, by
\[\Delta f = -{\Tr}^g_H(\nabla^2 f) = \nabla^*df = -\nabla^2 f({e_\gamma}, {e_\gamma}), \qquad |\nabla f|^2 = df({e_\gamma})df({e_\gamma}).\]
The function $f$ is an eigenfunction  of the sub-Laplacian with eigenvalue $\lambda$ if for some constant $\lambda$ we have
\begin{equation}
\Delta f = \lambda f.
\end{equation}
On the other hand, the Ricci identity implies
\begin{equation}\label{vderiv}
g(\nabla^2 f, {\omega_s}) = \nabla^2 f({e_\gamma}, I_s {e_\gamma}) = - 4 df(\xi_s)=-4f_s,
\end{equation}
where we set
\begin{equation}\label{e:vert deriv}
f_s \overset{def}{=} df(\xi_s).
\end{equation}

\subsection{Some basic identities}
We are ready to begin the proof of Theorem \ref{t:mainpan}. We shall assume throughout  all of the assumptions of Theorem \ref{t:mainpan}. The function $f$ will denote an eigenfunction achieving the lowest possible eigenvalue   $\lambda = 4$ of the sub-Laplacian.

It was shown in \cite[Remark 4.1]{IPV2} that, with the made assumptions,  the horizontal Hessian of $f$ is given by
\begin{equation}\label{e:Hessf}
\nabla^2 f(Y,X) = -fg(Y,X) - \sum_{s=1}^3 {f_s} {\omega_s}(Y,X), \qquad Y,X\in\Gamma(H).
\end{equation}
and
\begin{equation}  \label{e:extr}
 {\mathcal{L}}(\nabla f,\nabla f) - 4|\nabla f|^2=2\left[(S-2)|\nabla
f|^2+\frac {5}{3}T^{0}(\nabla f,\nabla f)\right]=0, \quad \int_M P_f(\nabla f)\, Vol_{\eta}=0,
\end{equation}
recalling the notation $f_s=df(\xi_s)$ set in \eqref{e:vert deriv}.
We note that the compactness of $M$ was essential in order to obtain the above identities by integrating the qc Bochner formula. Furthermore, if $f$ satisfies \eqref{e:Hessf}, then differentiating the Hessian equation we obtain the identity, \cite[Lemma 3.1]{IPV3},
\begin{equation}\label{e:Nab3f}
\nabla^3 f(A,Y,X) = -df(A)g(Y,X) - \sum_{s=1}^3 \nabla^2 f(A,\xi_s)  {\omega_s}(Y,X), \quad A\in\Gamma(TM), \,\,Y,X \in\Gamma(H).
\end{equation}
  We note that \cite{IPV3} assumes  $n>1$, but the cited lemma and its proof do not make use of this assumption. In addition, the argument leading to \cite[(3.8)]{IPV3} is valid in the case $n=1$ as well, i.e., we have the following identity
 \begin{equation}\label{e:1}
\sum_{s=1}^3 \nabla^2 f(I_s X, \xi_s) = (1-2S)df(X) - \frac{2}{3} T^0(X,\nabla f),
\end{equation}
which follows from the Ricci identity $\nabla^2 f(X,\xi_s) - \nabla^2 f(\xi_s, X) = T(\xi_s, X, \nabla f)$ applied to the left-hand side of   \cite[(3.8)]{IPV3}, noting that the Ricci idenity and   \eqref{TorT0} give
\begin{equation}\label{e:Ricci and TorT0}
\nabla^2 f(X, \xi_s) - \nabla^2 f(\xi_s, X) = - \frac{1}{4}[T^0(I_s X, \nabla f) + T^0(X, I_s \nabla f)].
\end{equation}

It will be convenient to define the quadratic symmetric (0,2)-tensor
$\Qpf$ by
\begin{equation}\label{e:Qpf}
\Qpf(X,Y) \overset{def}{=} 2\left[{\mathcal{L}}(X,Y) - 4g(X,Y)\right]=4\left[(S-2)g(X,Y) + \frac{5}{3}T^0(X,Y) \right].
\end{equation}
The Lichnerowicz-type bound  \eqref{e:Lich}   implies that  $\Qpf$ is non-negative $\Qpf(X,X)\geq 0$, hence, taking into account that $T^0$ is a traceless tensor, we have $S\geq 2$, while by \eqref{e:extr} we have
\[
\Qpf(\nabla f,\nabla f)=0.
\]

\begin{lemma}\label{lemPf}
%With the assumptions of Theorem \ref{t:mainpan}, if $f$ satisfies \eqref{e:Hessf} then   its $P$-form is $\Qpf(X,\nabla f)$, i.e.,
The  $P$-form  of $f$ is $\Qpf(X,\nabla f)$, i.e.,
\begin{equation}\label{e:Pformf}
P_f(X) =\Qpf(X,\nabla f)= 4(S-2)df(X) + \frac{20}{3}T^0(X,\nabla f).
\end{equation}
Furthermore, $\Qpf(X,\nabla f)=0$, hence
\begin{equation}\label{e:T0Xf}
T^0(X, \nabla f) = -\frac{3}{5}(S-2)df(X)
\end{equation}
\end{lemma}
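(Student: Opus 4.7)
The plan is to verify the identity $P_f(X) = \mathcal{P}(X,\nabla f)$ by a direct computation using the third-derivative formula \eqref{e:Nab3f}, and then to extract the pointwise vanishing from the Lichnerowicz hypothesis via a Cauchy--Schwarz argument for positive-semidefinite quadratic forms.

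\textbf{Step 1: the two $\nabla^3 f$ sums.} I would substitute \eqref{e:Nab3f} into the two sums appearing in the definition \eqref{e:Pfn} of $P_f(X)$. For the first sum $\sum_b \nabla^3 f(X,e_b,e_b)$ the $\omega_s$-terms drop out by antisymmetry, $\omega_s(e_b,e_b)=0$, and $\sum_b g(e_b,e_b)=4$, giving
\begin{equation}
\sum_{b=1}^{4}\nabla^{3}f(X,e_{b},e_{b}) = -4\,df(X).
\end{equation}
For the second sum, the $g$-term vanishes since $I_t$ is skew with respect to $g$, and using $\omega_s(\cdot,\cdot)=g(I_s\cdot,\cdot)$ together with the quaternion identities one sees
\begin{equation}
\sum_{b=1}^{4}\omega_s(e_b,I_t e_b)=4\delta_{st},
\end{equation}
which yields $\sum_b\nabla^3 f(I_tX,e_b,I_te_b)=-4\,\nabla^2 f(I_tX,\xi_t)$.

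\textbf{Step 2: apply \eqref{e:1}.} Summing over $t$ and invoking the identity
$\sum_{s=1}^3\nabla^2 f(I_sX,\xi_s)=(1-2S)df(X)-\tfrac23 T^0(X,\nabla f)$ from \eqref{e:1} converts the second sum into $(8S-4)df(X)+\tfrac83 T^0(X,\nabla f)$. Adding the two sums and the remaining $-4S\,df(X)+4T^0(X,\nabla f)$ from \eqref{e:Pfn}, the coefficients collapse to
\begin{equation}
P_f(X) = 4(S-2)\,df(X) + \tfrac{20}{3}\,T^0(X,\nabla f) = \mathcal{P}(X,\nabla f),
\end{equation}
which is exactly \eqref{e:Pformf}.

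\textbf{Step 3: from integral to pointwise vanishing.} The Lichnerowicz hypothesis \eqref{e:Lich} says $\mathcal{P}(X,X)\ge 0$ for every horizontal $X$, so the symmetric form $\mathcal{P}$ is positive semidefinite on $H$. Combining $P_f=\mathcal{P}(\cdot,\nabla f)$ with the integral identity $\int_M P_f(\nabla f)\,\mathrm{Vol}_\eta=0$ from \eqref{e:extr}, the non-negative integrand $\mathcal{P}(\nabla f,\nabla f)$ must vanish pointwise on $M$.

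\textbf{Step 4: Cauchy--Schwarz for psd forms.} For any psd symmetric bilinear form $Q$, $Q(v,v)=0$ implies $Q(v,w)=0$ for all $w$, since $0\le Q(v+tw,v+tw)=2tQ(v,w)+t^2 Q(w,w)$ forces the linear coefficient to be zero. Applied to $Q=\mathcal{P}$ and $v=\nabla f$, this gives $\mathcal{P}(X,\nabla f)=0$ for every horizontal $X$, i.e.,
\begin{equation}
4(S-2)\,df(X)+\tfrac{20}{3}\,T^0(X,\nabla f)=0,
\end{equation}
which is \eqref{e:T0Xf} after dividing by $20/3$.

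There is no real obstacle here; the proof is a careful bookkeeping of the tensorial sums, followed by a standard extraction of pointwise information from the integral identity using the psd character of $\mathcal{P}$. The only place where one must be a bit attentive is the cross-term $\sum_b\omega_s(e_b,I_te_b)=4\delta_{st}$, which uses both the compatibility $g(I_sX,I_sY)=g(X,Y)$ and the quaternion relations $I_sI_t=\pm I_r$ for $s\ne t$ to cancel the off-diagonal contributions.
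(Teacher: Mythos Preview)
Your proof is correct and follows essentially the same approach as the paper: compute the two third-order traces from \eqref{e:Nab3f}, apply \eqref{e:1} to evaluate $\sum_s\nabla^2 f(I_sX,\xi_s)$, and then use the positive-semidefiniteness of $\Qpf$ together with $\Qpf(\nabla f,\nabla f)=0$ and Cauchy--Schwarz to conclude $\Qpf(X,\nabla f)=0$. The only cosmetic difference is that the paper quotes the pointwise identity $\Qpf(\nabla f,\nabla f)=0$ directly from the first equation in \eqref{e:extr}, whereas you recover it from the integral identity $\int_M P_f(\nabla f)\,\Vol_\eta=0$ combined with non-negativity; both routes are available in \eqref{e:extr} and lead to the same conclusion.
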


\begin{proof}
Taking the appropriate traces in \eqref{e:Nab3f} we obtain
%\begin{equation}
\begin{align*}
\nabla^3 f(X,{e_\gamma},{e_\gamma}) & =-4df(X),\\
 \sum_{s=1}^3\nabla^3 f(I_sX, {e_\gamma}, I_s{e_\gamma})& =-4\sum_{s=1}^3\nabla^2f(I_sX,\xi_s)=-4(1-2S)df(X)+\frac 83T^0(X,\nabla f).
\end{align*}
%\end{equation}
A substitution of the above two identities  in the definition \eqref{e:Pfn} of the $P$-form of $f$ gives \eqref{e:Pformf}. The non-negativity of $\Qpf$, Cauchy-Schwarz' inequality and $\Qpf(\nabla f,\nabla f)=0$ imply $\Qpf(X,\nabla f)=0$.
\end{proof}

An immediate consequence of the above lemma are the following identities,
\begin{equation}\label{e:T0i}
T^0(I_s \nabla f, \nabla f) =0, \qquad s=1,2,3.
\end{equation}
In addition,  the covariant derivative of \eqref{e:T0Xf} along a horizontal vector $Y$ and the Hessian equation \eqref{e:Hessf} yield the following equation
\begin{multline}\label{e:nablaT0}
\nabla T^0(Y,X,\nabla f)  = -\frac{3}{5}dS(Y)df(X) + f\left(T^0(Y,X) + \frac{3}{5}(S-2)g(Y,X) \right) \\
+\sum_{s=1}^3{f_s}\left(T^0(I_sY,X) + \frac{3}{5}(S-2)\omega_s(Y,X) \right).
\end{multline}

\subsubsection{} Next we will obtain formulas for the individual terms in the sum \eqref{e:1} and  for the derivative of the obtained identities. This will be achieved by computing the horizontal qc-Ricci tensor $\zeta_s$ in two different ways.

\begin{lemma}\label{e:old}
%With the assumptions of Theorem \ref{t:mainpan}, if $f$ satisfies \eqref{e:Hessf} then
The following identities hold true % $\,\forall X,Y\in\Gamma(H)$
\begin{equation}\label{e:nabla2f1}
\nabla^2 f(X, \xi_i) = \frac{1}{5}(1+2S)df(I_i X)-\frac{2}{3}T^0(X, I_i \nabla f)
\end{equation}
and
\begin{multline}\label{e:nabla3f1}
\nabla^3 f(Y,X,\xi_i) =  \frac{1}{5}(1+2S)\left[ f\omega_i(Y,X) -{f_i} g(Y,X)+{f_j}\omega_k(Y,X)-{f_k}\omega_j(Y,X)\right]\\
 +\frac{2}{3}\left[ fT^0(X, I_i Y)- {f_i}T^0(X,  Y)+ {f_j}T^0(X, I_k Y)-{f_k}T^0(X, I_j Y) \right]\\
+\frac{2}{5}dS(Y)df(I_i X)-\frac{2}{3}\nabla T^0(Y, X, I_i \nabla f) .
\end{multline}
\end{lemma}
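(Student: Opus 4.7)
The plan is to prove \eqref{e:nabla2f1} by computing the qc-Ricci-type tensor $\zeta_i(X,\nabla f)$ in two different ways and equating the results, and then to derive \eqref{e:nabla3f1} by covariantly differentiating \eqref{e:nabla2f1} along a horizontal vector $Y$.

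For the first evaluation, I would substitute $Y = -I_i\nabla f$ into the second identity of \eqref{e:2.10}; using $I_i^2 = -\mathrm{id}$ together with the anti-self-adjointness of $I_i$ relative to $g$, this yields
\[
4\zeta_i(X,\nabla f) = -3\,T^0(X, I_i\nabla f) + T^0(I_iX, \nabla f) + 2S\,df(I_iX).
\]
For the second evaluation, I would begin from the Ricci commutator identity
\[
\nabla^3 f(A,B,C) - \nabla^3 f(B,A,C) = -R(A,B,C,\nabla f) - \nabla^2 f(T(A,B), C)
\]
applied at $A = e_\gamma$, $B = X$, $C = I_i e_\gamma$ and summed over the orthonormal horizontal frame. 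The curvature contribution becomes $4\zeta_i(X,\nabla f)$ via the definition of $\zeta_i$ and the antisymmetry of $R$ in its last two arguments; the two remaining $\nabla^3 f$ traces are computed explicitly by \eqref{e:Nab3f}; and the torsion term, using $T(e_\gamma, X) = 2\sum_s \omega_s(e_\gamma, X)\,\xi_s$, produces $\nabla^2 f(\xi_s,\cdot)$ expressions that are then converted to $\nabla^2 f(\cdot,\xi_s)$ via \eqref{e:Ricci and TorT0}. The outcome expresses $4\zeta_i(X,\nabla f)$ in terms of $\nabla^2 f(X,\xi_i)$, the two ``mixed'' Hessian terms $\nabla^2 f(I_kX,\xi_j)$ and $\nabla^2 f(I_jX,\xi_k)$, and several $T^0$ contractions.

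The main obstacle is to eliminate the mixed Hessian terms and streamline the $T^0$-contractions. The crucial reduction is to substitute $X \mapsto I_iX$ in \eqref{e:1}, which after applying the quaternionic relations expresses $-\nabla^2 f(I_kX,\xi_j) + \nabla^2 f(I_jX,\xi_k)$ purely in terms of $\nabla^2 f(X,\xi_i)$, $df(I_iX)$ and $T^0(I_iX,\nabla f)$. Together with the Casimir identity \eqref{e:Tss} applied at $(I_iX, \nabla f)$, which yields $T^0(I_kX, I_j\nabla f) - T^0(I_jX, I_k\nabla f) = T^0(I_iX,\nabla f) - T^0(X, I_i\nabla f)$, equating the two evaluations of $4\zeta_i(X,\nabla f)$ yields
\[
6\,\nabla^2 f(X,\xi_i) = -4\,T^0(X, I_i\nabla f) + \tfrac{8}{3}\,T^0(I_iX, \nabla f) + (4S-2)\,df(I_iX).
\]
Finally, \eqref{e:T0Xf} of Lemma \ref{lemPf}, applied with $X$ replaced by $I_iX$, gives $T^0(I_iX,\nabla f) = -\tfrac{3}{5}(S-2)\,df(I_iX)$, and substituting collapses the $df(I_iX)$-coefficient to $\tfrac{6(1+2S)}{5}$, proving \eqref{e:nabla2f1}.

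For \eqref{e:nabla3f1}, I would differentiate \eqref{e:nabla2f1} covariantly along a horizontal vector $Y$. From \eqref{e:Hessf} one reads $\nabla_Y\nabla f = -fY - \sum_s f_s\,I_sY$, and the quaternionic multiplication rules give
\[
I_i\nabla_Y\nabla f = -f\,I_iY + f_iY - f_j\,I_kY + f_k\,I_jY,
\]
which produces the $T^0$-block in \eqref{e:nabla3f1} when substituted into the derivative of $-\tfrac{2}{3}T^0(X, I_i\nabla f)$. Likewise, \eqref{e:Hessf} expands $\nabla^2 f(Y, I_iX)$ into $f\omega_i(Y,X) - f_i\,g(Y,X) + f_j\omega_k(Y,X) - f_k\omega_j(Y,X)$, generating the $(1+2S)/5$-block. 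The parallelism identities \eqref{e:Biq} for $\nabla I_s$ and $\nabla\xi_s$ introduce $sp(1)$ connection-form contributions $\alpha_j(Y), \alpha_k(Y)$ upon differentiation, but their coefficients cancel identically by virtue of \eqref{e:nabla2f1} applied with the index $i$ replaced by $j$ and $k$, leaving the tensorial identity \eqref{e:nabla3f1}.
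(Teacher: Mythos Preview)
Your proposal is correct and follows essentially the same route as the paper: compute $\zeta_i$ on the horizontal gradient in two ways---once from the algebraic formula in \eqref{e:2.10} and once via the third-order Ricci identity combined with \eqref{e:Nab3f}---then eliminate the mixed Hessian terms using \eqref{e:1}, simplify the $T^0$-contractions via \eqref{e:Tss}, and finally substitute \eqref{e:T0Xf}; the paper carries this out for $\zeta_i(I_iX,\nabla f)$ rather than your $\zeta_i(X,\nabla f)$, which amounts only to the reparametrization $X\leftrightarrow I_iX$ and produces exactly the intermediate identity \eqref{e:eq2} equivalent to your displayed formula for $6\,\nabla^2 f(X,\xi_i)$. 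Your derivation of \eqref{e:nabla3f1} by covariant differentiation along $Y$, using \eqref{e:Hessf} and the cancellation of the $sp(1)$ connection-form terms via \eqref{e:Biq}, is exactly the paper's argument.
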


\begin{proof}
First, we compute $\zeta_i(I_i X, \nabla f)$ using its definition \eqref{qc-Ric} and then apply the following third order Ricci identity
\[
R(X,Y,\nabla f,Z) =\nabla^3 f(X,Y,Z) - \nabla^3 f(Y,X,Z)+ 2 \sum_{s=1}^3\nabla^2 f(\xi_s, Z) {\omega_s}(X,Y),
\]
which give for a $i$ fixed the formula
\begin{multline}
4\zeta_i(I_i X, \nabla f) = R({e_\gamma}, I_iX,\nabla f, I_i {e_\gamma})
=\nabla^3 f({e_\gamma},I_i X,I_i {e_\gamma}) - \nabla^3 f(I_i X,{e_\gamma},I_i {e_\gamma})\\
 + 2 \sum_{s=1}^3\nabla^2 f(\xi_s, I_i {e_\gamma}) {\omega_s}({e_\gamma},I_i X).
\end{multline}
An application of \eqref{e:Nab3f} and \eqref{e:1} to the above equation brings us to
\begin{multline}\label{e:0.2}
4\zeta_i(I_i X, \nabla f)
= - df(X) - \sum_{s=1}^3 \nabla^2 f(e_\gamma, \xi_s)\omega_s(I_i X, I_i e_\gamma) + \sum_{s=1}^3 \nabla^2 f(I_i X, \xi_s)\omega_s(e_\gamma, I_i e_\gamma) \\
+ 2 \sum_{s=1}^3 \nabla^2 f(\xi_s,I_i e_\gamma)\omega_s(e_\gamma, I_i X)
=  - df(X) - \left[\nabla^2 f(I_i X, \xi_i) - \nabla^2 f(I_j X, \xi_j) - \nabla^2 f(I_k X, \xi_k)\right] \\
+ 4\nabla^2 f(I_i X, \xi_i)
+ 2[\nabla^2 f(\xi_i, I_i X) - \nabla^2 f(\xi_j, I_j X) - \nabla^2 f(\xi_k, I_k X)].
\end{multline}
Invoking \eqref{e:Ricci and TorT0} to re-write the last bracket in \eqref{e:0.2} we come to
\begin{multline}
4\zeta_i(I_i X, \nabla f) = - df(X) - \left[\nabla^2 f(I_i X, \xi_i) - \nabla^2 f(I_j X, \xi_j) - \nabla^2 f(I_k X, \xi_k)\right] + 4\nabla^2 f(I_i X, \xi_i) \\+ 2[\nabla^2 f(I_i X, \xi_i)-\nabla^2 f(I_j X, \xi_j) - \nabla^2 f(I_k X, \xi_k)] -\frac{1}{2}[T^0(X,\nabla f) - T^0(I_i X, I_i \nabla f)] \\+\frac{1}{2}[T^0(X,\nabla f) - T^0(I_j X, I_j \nabla f)] + \frac{1}{2}[T^0(X,\nabla f) - T^0(I_k X, I_k \nabla f)]  \\= - df(X) + 6\nabla^2 f(I_i X, \xi_i) - \sum_{s=1}^3 \nabla^2 f(I_s X, \xi_s) + T^0(X,\nabla f) + T^0(I_i X, I_i \nabla f)\\- \frac{1}{2}\left[T^0(X,\nabla f) + \sum_{s=1}^3 T^0(I_s X, I_s \nabla f)\right].
\end{multline}
Finally, using \eqref{e:Tss} and \eqref{e:1} the last equation takes the form
\begin{equation}\label{e:0.3} 4\zeta_i(I_i X, \nabla f) = 2(S-1)df(X) + 6\nabla^2 f(I_i X, \xi_i) + \frac{5}{3}T^0(X,\nabla f) + T^0(I_i X, I_i \nabla f).\end{equation}

On the other hand,  from \eqref{e:2.10} we have the following formula for $\zeta_i(I_i X, \nabla f)$
\begin{equation}\label{e:0.4}
4\zeta_i(I_i X, \nabla f) = -3T^0(I_i X, I_i \nabla f) - T^0(X,\nabla f) -2Sdf(X).
\end{equation}

From \eqref{e:0.4} and \eqref{e:0.3} we obtain
\begin{equation}\label{e:eq2}
3\nabla^2 f(X, \xi_i) = (2S-1)df(I_i X) +\frac{4}{3}T^0(I_i X,\nabla f) - 2 T^0( X, I_i \nabla f).
\end{equation}
Finally, a substitution of \eqref{e:T0Xf} into \eqref{e:eq2} gives \eqref{e:nabla2f1}.

The second identity in the lemma is obtained by taking the covariant derivative of \eqref{e:nabla2f1} along $Y$, noting that from \eqref{e:Biq}  the terms involving the connection $1$-forms coming from the covariant derivatives of $ I_i$ and $ \xi_i$ cancel, which gives
\begin{multline*}
\nabla^3 f(Y,X,\xi_i) = \frac{2}{5}dS(Y)df(I_i X) -\frac{2}{3}\nabla T^0(Y, X, I_i \nabla f) + \frac{1}{5}(1+2S)\nabla^2 f(Y, I_i X) \\
 +\frac{2}{3} fT^0(X, I_i Y) + \frac{2}{3}\sum_{s=1}^3 {f_s}T^0(X, I_i I_s Y),
\end{multline*}
Finally, using  the Hessian equation \eqref{e:Hessf} in the above formula gives \eqref{e:nabla3f1}.
\end{proof}

Some of the above identities  can be viewed as versions of formulas found in \cite{IPV3} that hold when $n=1$. Other than \eqref{e:Hessf} and \eqref{e:Nab3f} coming directly from \cite{IPV3} as stated above, \eqref{e:1} can be obtained from \cite[$(3.8)$]{IPV3} by  setting $U=0$, $n=1$, and then applying a Ricci identity.

On the other hand, identity \eqref{e:T0Xf} can be \emph{formally} obtained from \cite[$(3.5)$]{IPV3} by setting $U=0$ and $n=1$. When $n>1$, the proof of \cite[Lemma $3.2$]{IPV3} shows that \cite[$(3.5)$]{IPV3} is found  by subtracting \cite[$(3.7)$]{IPV3} from \cite[$(3.8)$]{IPV3}. However, if $n=1$ then \cite[$(3.7)$]{IPV3} is identical to \cite[$(3.8)$]{IPV3} and therefore we cannot obtain  \cite[$(3.5)$]{IPV3} when $n=1$ following \cite{IPV3}. In our case, in order to prove \eqref{e:T0Xf} we used  the compactness of $M$ to show that the $P$-form of $f$ vanishes instead. However, once \eqref{e:T0Xf} is known, substituting it into \eqref{e:1}, and then using a Ricci identity, will yield \cite[$(3.9)$]{IPV3}. In addition, \eqref{e:T0i} implies that \cite[$(3.6)$]{IPV3} continues to hold when $n=1$.
Finally,  \eqref{e:nabla2f1} and \eqref{e:nabla3f1} correspond to  \cite[Lemma 3.3]{IPV3}  and \cite[(3.18)]{IPV3}, respectively, but now they hold  in the case $n=1$.

\subsubsection{A key identity.}
Since the canonical connection preserves the type of a tensor and $T^0$ is symmetric, we can compute terms of the form $\nabla T^0(Y, \nabla f, I_i \nabla f)$ by finding $\nabla T^0(Y, I_i \nabla f, \nabla f)$ from \eqref{e:nablaT0};  we cannot obtain in this way explicit formulas for the $\nabla T^0(Y,I_j \nabla f, I_i \nabla f)$. However, with the help of the previous Lemmas, we will find a certain relation between the torsion $T^0$, the normalized qc-scalar curvature $S$ and their derivatives, which will lead to a system that  can be solved for the ''unknown'' components. To formulate it, we need the following covariant tensors that will also play a prominent role in the rest of the paper,
\begin{multline}\label{e:gamma}
\Gamma^i(Y,X) \overset{def}{=} {\omega_j}(Y,X)\rho_k(I_i \nabla f, \xi_i) - \omega_k(Y,X)\rho_j(I_i \nabla f, \xi_i)\\
  + df(I_kY)\rho_j(I_iX, \xi_i)  +df(I_k X)\rho_j(I_i Y, \xi_i)
- df(I_j Y)\rho_k(I_iX, \xi_i)  - df(I_j X) \rho_k(I_iY,\xi_i),
\end{multline}
with the last line equal to the symmetric part of $\Gamma^i$.
\begin{lemma}\label{l:5-3}
%With the assumptions of Theorem \ref{t:mainpan}, if $f$ satisfies \eqref{e:Hessf} then
The following identity holds true for any cyclic permutation of the indices $(i,j,k)$,
\begin{multline}\label{e:NablaT}
5\nabla T^0(Y,X,I_i \nabla f) - 3\nabla T^0(X,Y,I_i \nabla f) = -3[\nabla T^0(\nabla f, Y, I_i X) + \nabla T^0(\nabla f, I_i Y, X)] \\
+ 3dS(Y)df(I_i X) - \frac{9}{5}dS(X) df(I_iY) + \frac{6}{5}(4+3S){f_i}g(Y,X)  + 12 \sum_{s=1}^3 \nabla^2 f(\xi_i, \xi_s){\omega_s}(Y,X) \\
-\frac{12}{5}(1+2S)\left[f{\omega_i}(X,Y) + \sum_{s=1}^3 {f_s}{\omega_s}(Y,I_i X) \right] + f[ 5T^0(X, I_i Y) - 3 T^0(I_i X, Y)]\\
+{f_i} [6T^0(I_i X, I_i Y) - 8 T^0(X,Y)] + {f_j}[5T^0(X, I_k Y) + 6T^0(I_j X, I_i Y) + 3T^0(I_k X, Y)]\\
+ {f_k}[6T^0(I_k X, I_i Y) - 5T^0(X, I_j Y) - 3T^0(I_j X, Y)]  -12\Gamma^i(Y,X).
\end{multline}
\end{lemma}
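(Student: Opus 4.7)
The strategy is to compute the mixed curvature $R(Y,X,\xi_i,\nabla f)$ in two different ways and equate the results. On the one hand, applying the Ricci identity for the 1-form $df$ with horizontal vectors $Y, X$ evaluated at the vertical vector $\xi_i$, and using $T(Y,X) = 2\sum_s \omega_s(Y,X)\xi_s$ on $H \times H$, gives
\begin{equation*}
\nabla^3 f(Y,X,\xi_i) - \nabla^3 f(X,Y,\xi_i) = -R(Y,X,\xi_i,\nabla f) - 2\sum_{s=1}^3 \omega_s(Y,X)\,\nabla^2 f(\xi_s, \xi_i).
\end{equation*}
Substituting \eqref{e:nabla3f1} into both third-order terms on the left and using the antisymmetry of each $\omega_s$ together with the symmetry of $g$, one obtains an identity of the form
\begin{equation*}
-\tfrac{2}{3}\bigl[\nabla T^0(Y,X,I_i\nabla f) - \nabla T^0(X,Y,I_i\nabla f)\bigr] + E(Y,X) = -R(Y,X,\xi_i,\nabla f) - 2\sum_{s=1}^3 \omega_s(Y,X)\,\nabla^2 f(\xi_s, \xi_i),
\end{equation*}
where $E(Y,X)$ is explicit in $S$, $T^0$, $f$, and $f_s$.

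On the other hand, $R(Y,X,\xi_i,\nabla f)$ can be opened up via the qc first Bianchi identity for the Biquard connection, which relates the cyclic sum $R(Y,X,\xi_i,\nabla f) + R(X,\xi_i,Y,\nabla f) + R(\xi_i,Y,X,\nabla f)$ to derivatives and products of the torsion $T$. The two ``sideways'' curvature contributions $R(X,\xi_i,Y,\nabla f)$ and $R(\xi_i,Y,X,\nabla f)$ are in turn computed by a further application of the Ricci identity to $\nabla^2 f$ across the horizontal/vertical split and by invoking \eqref{e:Ricci and TorT0}, \eqref{e:nabla2f1}, \eqref{e:nablaT0}, and the seven-dimensional torsion relation \eqref{TorT0}. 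Through this expansion the four $\rho$-contractions organise into the tensor $\Gamma^i(Y,X)$ of \eqref{e:gamma}, the derivatives of $T^0$ in the first slot along $\nabla f$ show up as the combination $-3[\nabla T^0(\nabla f, Y, I_iX) + \nabla T^0(\nabla f, I_iY, X)]$, and the remaining terms $dS(Y)df(I_iX)$, $dS(X)df(I_iY)$, $f_i g(Y,X)$, the mixed vertical Hessian $\nabla^2 f(\xi_i,\xi_s)$, and the various $f, f_s$ torsion products each emerge with the coefficients prescribed by \eqref{e:NablaT}.

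Equating the two expressions for $R(Y,X,\xi_i,\nabla f)$ and solving for the $\nabla T^0$ terms yields an identity with a linear combination of $\nabla T^0(Y,X,I_i\nabla f)$ and $\nabla T^0(X,Y,I_i\nabla f)$ on the left; the asymmetric coefficients $5$ and $-3$ reflect the different weights picked up by these two derivatives through the sideways Ricci identities inside the Bianchi expansion. Collecting the coefficients of $f$, $f_i$, $f_j$, $f_k$, $dS$, $\omega_s$, the torsion products and $\Gamma^i$ on the right then reproduces exactly \eqref{e:NablaT}.

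The principal obstacle is the controlled expansion of the sideways contributions $R(X,\xi_i,Y,\nabla f)$ and $R(\xi_i,Y,X,\nabla f)$, which must be carried out through repeated Ricci identities and use of \eqref{TorT0}, replacing the $U$-based simplification available in \cite{IPV3} for $n>1$. The algebra is voluminous but systematic, and the specific numerical coefficients $\tfrac{12}{5}(1+2S)$, $\tfrac{6}{5}(4+3S)$, and the precise torsion products appearing in \eqref{e:NablaT} emerge from the combinatorial structure of the qc first Bianchi identity in this mixed horizontal/vertical setting, together with repeated use of \eqref{e:Tss} and the Hessian equation \eqref{e:Hessf}.
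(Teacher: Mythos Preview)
Your strategy has a structural gap that prevents it from reaching the full identity \eqref{e:NablaT}. The master equation you start from,
\[
\nabla^3 f(Y,X,\xi_i) - \nabla^3 f(X,Y,\xi_i) = -R(Y,X,\xi_i,\nabla f) - 2\sum_{s}\omega_s(Y,X)\,\nabla^2 f(\xi_s,\xi_i),
\]
is \emph{antisymmetric} in $(Y,X)$, and so is every subsequent rewriting of it. Expanding $R(Y,X,\xi_i,\nabla f)$ through the first Bianchi identity and then computing the ``sideways'' pieces $R(\xi_i,Y,X,\nabla f)$, $R(X,\xi_i,Y,\nabla f)$ by further Ricci identities only rewrites the same antisymmetric quantity; it cannot introduce a symmetric-in-$(Y,X)$ component. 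Consequently, once all terms of the form $A=\nabla T^0(Y,X,I_i\nabla f)$ and $B=\nabla T^0(X,Y,I_i\nabla f)$ are collected, their coefficients must be equal and opposite, so you obtain an identity of the shape $c\,(A-B)=\cdots$. At best this is the antisymmetric part of \eqref{e:NablaT}; it cannot produce the asymmetric pair $(5,-3)$, and in particular it misses the symmetric part \eqref{e:NablaTsym}, which the paper uses repeatedly afterwards.

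The paper avoids this obstruction by comparing two \emph{different} formulas for the single quantity $\nabla^3 f(Y,X,\xi_i)$ rather than antisymmetrizing. One formula is \eqref{e:nabla3f1}, obtained by differentiating \eqref{e:nabla2f1}. The second, \eqref{e:nabla3f2} (simplified to \eqref{g:1}), comes from the mixed Ricci identity \eqref{e:Ricid6},
\[
\nabla^3 f(Y,X,\xi_i)=\nabla^3 f(\xi_i,Y,X)+\nabla^2 f(T(\xi_i,Y),X)+\nabla^2 f(Y,T(\xi_i,X))+df((\nabla_Y T)(\xi_i,X))+R(\xi_i,Y,X,\nabla f),
\]
with $R(\xi_i,Y,X,\nabla f)$ supplied directly by the structural formula \eqref{e:genR}. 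Subtracting \eqref{e:nabla3f1} from \eqref{g:1} is not an antisymmetrization in $(Y,X)$; the $\nabla T^0$ contributions are $-\tfrac{2}{3}A$ from \eqref{e:nabla3f1} and $-\tfrac14(A+B)$ from \eqref{g:1}, and the difference produces exactly $\tfrac{5}{12}A-\tfrac14 B$, i.e.\ the coefficients $(5,-3)$ after clearing denominators. To repair your argument you would need an independent derivation of the symmetric part, or else switch to comparing $\nabla^3 f(Y,X,\xi_i)$ with the expression coming from \eqref{e:Ricid6} and \eqref{e:genR} as the paper does.
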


\begin{proof}
We begin by  finding another  formula for $\nabla^3 f(Y,X, \xi_i)$, see \eqref{e:nabla3f2} below, besides the already known identity \eqref{e:nabla3f1}. We begin by using the third order Ricci identity
\begin{multline}\label{e:Ricid6}
\nabla^3 f(Y,X,\xi_i) = \nabla^3 f(\xi_i, Y,X) + \nabla^2 f(T(\xi_i,Y),X) + \nabla^2 f(Y, T(\xi_i,X)) \\
+ df((\nabla_Y T)(\xi_i,X)) + R(\xi_i,Y,X,\nabla f).
\end{multline}
Next, we compute each of the terms in the right-hand side of \eqref{e:Ricid6} separately. The first term can be  simplified with the help of \eqref{e:Nab3f}, which gives
\[
\nabla^3 f(\xi_i, Y,X) = -{f_i}g(Y,X) - \sum_{s=1}^3 \nabla^2 f(\xi_i, \xi_s)\,{\omega_s}(Y,X).
\]
The Hessian equation \eqref{e:Hessf} and \eqref{e:Ricci and TorT0}  show that
\[
\nabla^2 f(T(\xi_i,Y),X) = \frac{1}{4} f[T^0(I_i Y, X) + T^0(Y, I_iX)] - \frac{1}{4}\sum_{t=1}^3 df(\xi_t)[T^0(I_i Y, I_t X) + T^0(Y, I_i I_t X)].
\]
The third term in  the right-hand side of \eqref{e:Ricid6} is handled similarly. Next, use \eqref{e:Tss} to simplify the sum of the above formulas for the  second and third terms, which give
\begin{multline}\label{e:nab2fsum}
\nabla^2 f(T(\xi_i,Y),X) + \nabla^2 f(Y, T(\xi_i,X))  = \frac{1}{2}f[T^0(I_i X, Y) + T^0(X, I_i Y)] \\
+ \frac{1}{2}{f_j}[T^0(X, I_k Y) - T^0(I_k X, Y)] + \frac{1}{2}{f_k}[T^0(I_j X, Y) - T^0(X, I_j Y)].
\end{multline}
To simplify the fourth term, we differentiate \eqref{TorT0}, using \eqref{e:Biq}, which gives
\begin{equation}\label{e:dfnabT}
df((\nabla_Y T)(\xi_i,X))=(\nabla_Y T)(\xi_i,X,\nabla f) = -\frac{1}{4}[\nabla T^0(Y, I_i X,\nabla f) + \nabla T^0(Y,X,I_i\nabla f)].
\end{equation}
At this point we need the following general formula for the curvature, cf. \cite{IV, IV2},
\begin{multline}\label{e:genR}
R(\xi_i, Y,X,Z) =  -\frac{1}{4}[\nabla T^0(X, I_i Z, Y)
+ \nabla T^0(X, Z, I_i Y)]\\
+\frac{1}{4}[\nabla T^0(Z, I_i X, Y) + \nabla T^0(Z, X, I_i Y)] +{\omega_j}(Y,X)\rho_k(I_i Z, \xi_i) - \omega_k(Y,X)\rho_j(I_i Z, \xi_i)\\
+ \omega_k(Y,Z)\rho_j(I_i X, \xi_i)+ \omega_k(X,Z)\rho_j(I_i Y, \xi_i)
-{\omega_j}(Y,Z)\rho_k(I_i X, \xi_i)- {\omega_j}(X,Z)\rho_k(I_i Y, \xi_i) .
\end{multline}
Letting $Z=\nabla f$ in \eqref{e:genR} gives the following formula for the fifth term in  the right-hand side of \eqref{e:Ricid6},
\begin{multline*}
R(\xi_i, Y, X, \nabla f) = -\frac{1}{4}[\nabla T^0(X, I_i \nabla f, Y) + \nabla T^0(X, \nabla f, I_i Y)] \\
+\frac{1}{4}[\nabla T^0(\nabla f, I_i X, Y) + \nabla T^0(\nabla f, X, I_i Y)] +\Gamma^i(Y,X),
\end{multline*}
recalling the tensor $\Gamma^i(Y,X)$ defined in \eqref{e:gamma}. A substitution of the above identities  into \eqref{e:Ricid6} yields the sought formula for $\nabla^3 f(Y,X, \xi_i)$, i.e.,

\begin{multline}\label{e:nabla3f2}
\nabla^3 f(Y,X,\xi_i) = -{f_i}g(Y,X) - \sum_{s=1}^3\nabla^2 f(\xi_i, \xi_s)\omega_s(Y,X) + \frac{1}{2}f[T^0(I_i X, Y) + T^0(X, I_i Y)] \\
+ \frac{1}{2}{f_j}[T^0(X, I_k Y) - T^0(I_k X, Y)] + \frac{1}{2}{f_k}[T^0(I_j X, Y) - T^0(X, I_j Y)] \\
 - \frac{1}{4}[\nabla T^0(Y, I_i X, \nabla f) + \nabla T^0(Y,X, I_i \nabla f)] - \frac{1}{4}[\nabla T^0(X, I_i \nabla f, Y) + \nabla T^0(X, \nabla f, I_i Y)] \\
 + \frac{1}{4}[\nabla T^0 (\nabla f, I_i X, Y) + \nabla T^0(\nabla f, X, I_i Y)] + \Gamma^i(Y,X).
\end{multline}

After this initial calculation, we use \eqref{e:nablaT0} and the symmetry in the last two indices of $\nabla T^0$ to rewrite and expand the terms $\nabla T^0(Y, I_i X, \nabla f)$ and $\nabla T^0(X, \nabla f, I_i Y)$ in \eqref{e:nabla3f2}. After a small simplification we obtain
\begin{multline}\label{g:1}
\nabla^3 f(Y, X, \xi_i) = -\frac{1}{10}(4+3S){f_i}g(Y,X) - \sum_{s=1}^3\nabla^2 f(\xi_i ,\xi_s)\omega_s(Y,X) \\
+\frac{1}{4}f[T^0(I_i X, Y) + T^0(X, I_i Y)]  +\frac{3}{20}[dS(Y)df(I_i X) + dS(X)df(I_i Y)]  - \frac{1}{2}{f_i}T^0(I_i X, I_i Y)\\
-\frac{1}{4}{f_j}[T^0(I_j Y, I_i X) + T^0(I_j X, I_i Y) -2T^0(X, I_k Y) + 2 T^0(I_k X, Y)] \\
- \frac{1}{4}{f_k}[T^0(I_k Y, I_i X) + T^0(I_k X, I_i Y) -2T^0(I_j X, Y) +2T^0(X, I_j Y)] \\ -\frac{1}{4}[\nabla T^0(Y,X,I_i \nabla f) + \nabla T^0(X,Y, I_i \nabla f)] +\frac{1}{4}[\nabla T^0(\nabla f, I_i X, Y) + \nabla T^0(\nabla f, X, I_i Y)] + \Gamma^i(Y,X).
\end{multline}

Next, we subtract \eqref{e:nabla3f1} from \eqref{g:1} and collect the terms containing  $\nabla T^0( \cdot\,,\cdot\,,I_i \nabla f)$ on one side leaving the terms containing the ''unknown'' components of $\nabla T^0$ and the vertical Hessian of $f$ on the other side. With the help of \eqref{e:Tss} we simplify the bracketed terms multiplying the vertical derivatives of $f$, which gives the claimed formula \eqref{e:NablaT}.
%We note that \eqref{e:NablaT} is valid for any cyclic permutation $(i\,j\,k)$.
\end{proof}

For  several calculations we will need the symmetric part of \eqref{e:NablaT}, which is given by
\begin{multline}\label{e:NablaTsym}
\nabla T^0(Y, X, I_i \nabla f) + \nabla T^0(X, Y, I_i \nabla f) = -3[\nabla T^0(\nabla f, I_i Y, X) +\nabla T^0(\nabla f, Y, I_i X)] \\
 + \frac{3}{5}[dS(Y)df(I_i X) + dS(X)df(I_i Y)] -\frac{6}{5}(S-2)g(Y,X) f_i\\
 + f[T^0(Y, I_i X) + T^0(I_i Y, X)] +f_i [6T^0(I_i Y, I_i X) - 8T^0(Y, X)] \\
 +f_j[4T^0(I_k Y, X) + 4 T^0(Y, I_k X) + 3T^0(I_i Y, I_j X) + 3 T^0(I_j Y, I_i X)] \\
 +f_k[3T^0(I_k Y, I_i X) + 3 T^0(I_i Y, I_k X) - 4 T^0(Y, I_j X) - 4T^0(I_j Y, X)] \\
  - 12\left[  df(I_kY)\rho_j(I_iX, \xi_i)  +df(I_k X)\rho_j(I_i Y, \xi_i)
- df(I_j Y)\rho_k(I_iX, \xi_i)  - df(I_j X) \rho_k(I_iY,\xi_i)\right],
\end{multline}
taking into account
\begin{multline*}
\sum_{s=1}^3 f_s[{\omega_s}(Y, I_i X) + {\omega_s}(X, I_i Y)] = f_i[{g}(I_iY, I_i X) + {g}(I_iX, I_i Y)] + f_j[{g}(I_jY, I_i X) + {g}(I_jX, I_i Y)]\\
+ f_k[{g}(I_kY, I_i X) + {g}(I_kX, I_i Y)]
=f_i[{g}(Y,  X) + {g}(X,  Y)] + f_j[{g}(Y, I_k X) - {g}(I_kX, I_i Y)]\\
+f_k[{g}(I_jY,  X) - {g}(X, I_j Y)]=2f_i{g}(X,  Y).
\end{multline*}

\subsection{Unique continuation and a special frame}
Let $h$ be the Riemannian metric \eqref{e:h metric} and $\Delta^h$ be the associated elliptic Laplacian. In the following lemma we will give a version of \cite[Lemma 3.6]{IPV3} for the case $n=1$. In particular, this will allow the construction at almost every point of $M$ of a global orthonormal frame of the horizontal space using the horizontal gradient of $f$.

\begin{lemma}\label{elliplem}
%With the assumptions of Theorem \ref{t:mainpan}, if $f$ satisfies \eqref{e:Hessf} then
The eigenfunction $f$ obeys the following identity
\begin{equation}\label{e:ellip}
\Delta^h f = \left( \frac{19+8S}{5} \right)f - \frac{2}{5}dS(\nabla f).
\end{equation}
In particular, $f$ and its horizontal gradient  $\nabla f$  do not vanish on any open set. Thus, if we let
\[
I_0\overset{def}{=} \id{_H}, \qquad \sigma_\alpha\overset{def}{=}  |\nabla f|^{-1} I_\alpha \nabla f,
\]
then $\{\sigma_\alpha\}_{\alpha = 0}^3$ is an orthonormal frame for the horizontal space $H$ at almost every point of $M$.
\end{lemma}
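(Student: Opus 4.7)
The plan is to first derive \eqref{e:ellip} and then apply Aronszajn's unique continuation theorem together with the Hessian structure of $f$. To obtain \eqref{e:ellip}, I would relate the Levi-Civita Laplacian $\Delta^h$ to the Biquard connection $\nabla$. In an orthonormal frame $\{e_0,\dots,e_3,\xi_1,\xi_2,\xi_3\}$ adapted to $TM=H\oplus V$, the Levi-Civita Hessian of $f$ differs from the Biquard Hessian by a first-order tensor determined by the contorsion, so
\[
\Delta^h f \;=\; \Delta f \;-\; \sum_{s=1}^{3}\nabla^2 f(\xi_s,\xi_s) \;+\; df(V_0),
\]
where $V_0$ is a vector field built from the Biquard torsion. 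Using the qc-Ricci identities \eqref{e:2.10} and \eqref{TorT0}, together with \eqref{e:T0Xf}, the contorsion trace $df(V_0)$ evaluates to a scalar multiple of $Sf$ with no residual $T^0$-term.

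Next, the vertical trace is obtained as follows: from \eqref{e:Nab3f} with $A=\xi_i$, $Y=I_iX$, $X$ horizontal, and summing over an orthonormal horizontal frame, the identities $\omega_s(I_ie_\gamma,e_\gamma)=-\delta_{si}$ and $g(I_ie_\gamma,e_\gamma)=0$ give
\[
4\,\nabla^2 f(\xi_i,\xi_i) \;=\; \sum_\gamma \nabla^3 f(\xi_i, I_ie_\gamma, e_\gamma).
\]
Using the Ricci identity to commute $\xi_i$ past the two horizontal derivatives and plugging \eqref{e:nabla3f1} into the resulting expression, the traces $\sum_\gamma T^0(e_\gamma,I_se_\gamma)$ vanish (since $T^0$ is a symmetric endomorphism and $I_s$ is skew), while \eqref{e:T0Xf} and \eqref{e:T0i} absorb all $T^0(\cdot,\nabla f)$ remainders and reduce horizontal divergences of $T^0$ applied to $\nabla f$ to a multiple of $dS(\nabla f)$. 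Summing over $i=1,2,3$ writes $\sum_s \nabla^2 f(\xi_s,\xi_s)$ as a linear combination of $f$, $Sf$, and $dS(\nabla f)$; combining with $\Delta f=4f$ and $df(V_0)$ yields \eqref{e:ellip}.

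Once \eqref{e:ellip} is in place, $f$ satisfies a linear uniformly elliptic equation on the compact $M$ with smooth, hence bounded, coefficients, so Aronszajn's unique continuation theorem rules out the vanishing of the nontrivial eigenfunction $f$ on any open subset of $M$. The horizontal gradient requires a separate argument: if $\nabla f\equiv 0$ on an open $U$, then because the Biquard connection preserves $H$ (i.e.\ $\nabla_Y X\in\Gamma(H)$ whenever $X\in\Gamma(H)$, $Y\in\Gamma(TM)$), the tensorial formula $\nabla^2 f(Y,X)=Y(df(X))-df(\nabla_Y X)$ forces $\nabla^2 f|_{H\times H}\equiv 0$ on $U$; substituting into \eqref{e:Hessf} and taking the symmetric trace gives $f\equiv 0$ on $U$, contradicting the previous step. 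Finally, at any point where $|\nabla f|\ne 0$, the Hermitian compatibility of $g$ with the $I_s$ and the skew-symmetry $g(I_sX,X)=0$ imply $g(I_\alpha\nabla f, I_\beta\nabla f)=|\nabla f|^2\,\delta_{\alpha\beta}$, so $\{\sigma_\alpha\}_{\alpha=0}^3$ is an orthonormal frame of $H$ on the dense open set $\{|\nabla f|>0\}$. The principal technical obstacle is the first step: $\sum_s \nabla^2 f(\xi_s,\xi_s)$ and the contorsion trace $df(V_0)$ must conspire with $\Delta f=4f$ to produce exactly $\tfrac{19+8S}{5}f-\tfrac{2}{5}dS(\nabla f)$, with all the curvature-torsion remainders cancelling via the qc-structural identities and \eqref{e:T0Xf}.
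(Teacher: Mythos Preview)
Your overall strategy is sound and close to the paper's, but there is one concrete inaccuracy and one methodological difference worth noting.

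First, the contorsion term: by \cite[Lemma~5.1]{IPV1}, the relation between the Riemannian and sub-Laplacians on a qc manifold is exactly $\Delta^h f = \Delta f - \sum_{s=1}^3 \nabla^2 f(\xi_s,\xi_s)$, with \emph{no} residual first-order term. Your $df(V_0)$ is therefore zero, not ``a scalar multiple of $Sf$.'' If you carry a nonzero multiple of $Sf$ from this term into the final bookkeeping you will not land on $(19+8S)/5$ unless a compensating error appears elsewhere; as written this is a gap you should close by either invoking \cite{IPV1} directly or verifying $V_0=0$ from the qc contorsion formulas.

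Second, the computation of $\sum_s\nabla^2 f(\xi_s,\xi_s)$: your route via $4\nabla^2 f(\xi_i,\xi_i)=\sum_\gamma\nabla^3 f(\xi_i,I_ie_\gamma,e_\gamma)$ followed by the Ricci identity \eqref{e:Ricid6} and \eqref{e:nabla3f1} is legitimate, but the paper does this more economically by tracing the already-established identity \eqref{e:NablaT} with $Y=I_ie_\gamma$, $X=e_\gamma$. That trace immediately isolates $\nabla^2 f(\xi_i,\xi_i)$ in terms of $\nabla T^0(e_\gamma,I_ie_\gamma,I_i\nabla f)$, and summing over $i$ using \eqref{e:Tss} together with \eqref{e:nablaT0} (which gives $\nabla T^0(e_\gamma,e_\gamma,\nabla f)=-\tfrac{3}{5}[dS(\nabla f)-4(S-2)f]$) yields $\sum_s\nabla^2 f(\xi_s,\xi_s)=\tfrac{2}{5}dS(\nabla f)+\tfrac{1}{5}(1-8S)f$ directly. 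Your proposal essentially redoes a special case of the derivation of \eqref{e:NablaT}; it will work, but you are not exploiting machinery already in place.

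Your treatment of the unique-continuation step and of the nonvanishing of $\nabla f$ via \eqref{e:Hessf} is correct and in fact more explicit than the paper's one-line citation of Aronszajn. The orthonormality of $\{\sigma_\alpha\}$ is handled correctly.
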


\begin{proof}
Following \cite[Lemma $5.1$]{IPV1} the Riemannian Laplacian $\Delta^h$ and the sub-Laplacian $\Delta$ are related by
\begin{equation}\label{e:RiemLap}
\Delta^h f = \Delta f - \sum_{s=1}^3 \nabla^2 f(\xi_s, \xi_s).
\end{equation}
Taking the trace $X={e_\gamma}$, $Y=I_i {e_\gamma}$ of \eqref{e:NablaT} using that $T^0$ is completely trace-free gives
\begin{multline}\label{e:NablaT01i}
5\nabla T^0(I_i {e_\gamma}, {e_\gamma}, I_i \nabla f) - 3\nabla T^0({e_\gamma}, I_i {e_\gamma}, I_i \nabla f) = 3dS(I_i {e_\gamma})df(I_i {e_\gamma})
+ \frac{9}{5}dS({e_\gamma})df({e_\gamma}) \\ -\frac{48}{5}(1+2S)f
 +48 \nabla^2 f(\xi_i, \xi_i) - 12 \Gamma^i(I_i {e_\gamma}, {e_\gamma}).
\end{multline}
From $\nabla T^0({e_\gamma}, I_i {e_\gamma}, X) = - \nabla T^0(I_i {e_\gamma}, {e_\gamma}, X)$ and $\Gamma^i(I_i {e_\gamma}, {e_\gamma})=0$ by \eqref{e:gamma} we can solve for the component of the vertical Hessian of $f$ in \eqref{e:NablaT01i} which gives
\begin{equation}\label{e:vertHessf}
\nabla^2 f(\xi_i, \xi_i) = \frac{1}{6}\nabla T^0({e_\gamma}, I_i {e_\gamma}, I_i \nabla f) + \frac{1}{10}dS(\nabla f) - \frac{1}{5}(1+2S)f.
\end{equation}
Using \eqref{e:Tss} in which we take $X = I_i {e_\gamma}$, $Y=I_i \nabla f$, and \eqref{e:Biq}, gives the following trace formula
\begin{equation}\label{e:8}
\nabla T^0({e_\gamma}, {e_\gamma}, \nabla f) + \sum_{s=1}^3 \nabla T^0({e_\gamma},I_s {e_\gamma}, I_s \nabla f) =0.
\end{equation}
\noindent Then, \eqref{e:nablaT0} with $X=Y={e_\gamma}$ shows that the divergence of $T^0$ satisfies
\begin{equation}\label{e:divT0}
\nabla T^0({e_\gamma}, {e_\gamma}, \nabla f) = -\frac{3}{5}\left[dS(\nabla f) - 4(S-2)f\right].
\end{equation}
\noindent Therefore, \eqref{e:vertHessf} with \eqref{e:8} and \eqref{e:divT0} implies
\begin{equation}\label{e:7}
\sum_{s=1}^3 \nabla^2 f(\xi_s, \xi_s) = \frac{2}{5}dS(\nabla f) + \frac{1}{5}(1-8S)f.
\end{equation}
\noindent A substitution of \eqref{e:7} into \eqref{e:RiemLap}, taking into account that  $\Delta f = 4f$, shows \eqref{e:ellip}.

The final part of the Lemma follows from Aronszajn's unique continuation result \cite{Aron}.
\end{proof}

\subsubsection{Components of the torsion } Since in lemma \ref{elliplem} we  found a global orthonormal frame  $\{\sigma_\alpha,\, \xi_s\}$,  it will be convenient to use the index notation for the components of the involved tensors constructed from the torsion as follows
\begin{equation}\label{sigalph}
%f_s \overset{def}{=} {f_s}, \quad
T_{\alpha\beta}\overset{def}{=} T^0(I_\alpha \nabla f, I_\beta \nabla f), \qquad \nabla T^0(I_{\gamma} \nabla f, I_{\alpha} \nabla f, I_{\beta} \nabla f) = T_{\alpha\beta;\gamma},
\end{equation}
where $I_0\overset{def}{=} \id{_H}$.
In particular, the fact that $T^0$ is a symmetric tensor can be written as $T_{\alpha\beta} = T_{\beta\alpha}$ and \eqref{e:Tss} becomes
\begin{equation}\label{e:Taa}
T_{00} + T_{11} + T_{22} + T_{33} = 0.
\end{equation}
Furthermore, from the properties of the connection  we have $T_{\alpha\beta;\gamma} = T_{\beta\alpha;\gamma}$.

Next, we will  show that  $T_{i0;0}$ vanish. This will yield a relation between the vertical derivatives $f_s$ and the torsion components $T_{\alpha\beta}$.

\begin{lemma}
%With the assumptions of Theorem \ref{t:mainpan}, if $f$ satisfies \eqref{e:Hessf} then
The following identities between the components $T_{\alpha\beta}$ of the torsion tensor and the vertical derivatives $f_i$ of the eigenfunction $f$ hold true
\begin{equation}\label{e:T0i0}
f_s T_{00} = f_1 T_{s1} + f_2 T_{s2} + f_3 T_{s3},\qquad  s=1,2,3,
\end{equation}
and
\begin{equation}\label{e:T0i00}
T_{i0;0} = 0.
\end{equation}
\end{lemma}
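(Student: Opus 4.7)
The plan is to derive two independent linear relations among the three quantities $T_{i0;0}$, $f_iT_{00}$, and $\sum_{r=1}^{3}f_rT_{ir}$, and then to solve the resulting $2\times 2$ system. The main input will be the formula \eqref{e:nablaT0} on one hand and the symmetric form \eqref{e:NablaTsym} of Lemma \ref{l:5-3} on the other.

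For the first relation, I would take $Y=\nabla f$ and $X=I_i\nabla f$ in \eqref{e:nablaT0}. Most of the terms collapse immediately: $df(I_i\nabla f)=0$; $T^0(\nabla f,I_i\nabla f)=0$ by \eqref{e:T0i}; $g(\nabla f,I_i\nabla f)=0$; and $\omega_r(\nabla f,I_i\nabla f)=g(I_r\nabla f,I_i\nabla f)=|\nabla f|^2\delta_{ri}$. Combined with the identity $T_{00}=-\tfrac{3}{5}(S-2)|\nabla f|^2$, which is just \eqref{e:T0Xf} evaluated at $X=\nabla f$, this gives
\[
T_{i0;0}=\sum_{r=1}^{3}f_r T_{ir}-f_i T_{00}. \qquad (\ast)
\]

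For the second relation, I would specialize \eqref{e:NablaTsym} to $X=Y=\nabla f$. The left-hand side becomes $2T_{i0;0}$ after using the symmetry of $T^0$ in its two arguments (which identifies $T_{0i;0}$ with $T_{i0;0}$). On the right-hand side the first bracket contributes $-6T_{i0;0}$; every term containing a factor $df(I_s\nabla f)$ vanishes; every occurrence of $T^0(\nabla f,I_s\nabla f)$ vanishes by \eqref{e:T0i}; and the long $\rho_j,\rho_k$ bracket at the end vanishes because $df(I_j\nabla f)=df(I_k\nabla f)=0$. What survives are the $-\tfrac{6}{5}(S-2)|\nabla f|^2 f_i=2T_{00}f_i$ term together with the $f_i,f_j,f_k$ contributions, which, after collapsing the four-term expressions and using $T_{sr}=T_{rs}$, reassemble into $6T_{ii}f_i+6T_{ij}f_j+6T_{ik}f_k=6\sum_{r=1}^{3}f_r T_{ir}$. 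The net result is
\[
4T_{i0;0}=-3f_i T_{00}+3\sum_{r=1}^{3}f_r T_{ir}. \qquad (\ast\ast)
\]

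Three times $(\ast)$ produces exactly the right-hand side of $(\ast\ast)$, so $4T_{i0;0}=3T_{i0;0}$, i.e., $T_{i0;0}=0$. This establishes \eqref{e:T0i00}, and feeding $T_{i0;0}=0$ back into $(\ast)$ yields \eqref{e:T0i0}.

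The main obstacle is the second step, namely the bookkeeping needed to collapse the many $f_i,f_j,f_k$ cross-terms in \eqref{e:NablaTsym} under the substitution $X=Y=\nabla f$. The strategy works precisely because these brackets conspire to reassemble the same linear combination $\sum_r f_r T_{ir}$ already appearing in $(\ast)$ but with a different coefficient, closing the linear system in $T_{i0;0}$ and $\sum_r f_r T_{ir}-f_iT_{00}$.
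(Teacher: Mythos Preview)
Your argument is correct and the two displayed relations $(\ast)$ and $(\ast\ast)$ are exactly right; combining them gives $T_{i0;0}=0$ and then \eqref{e:T0i0} as you claim. One small imprecision in your prose: the $f_i$--bracket in \eqref{e:NablaTsym} contributes $6T_{ii}-8T_{00}$, not just $6T_{ii}$, so the torsion brackets by themselves do \emph{not} reassemble into $6\sum_r f_rT_{ir}$; it is only after the $-8f_iT_{00}$ is combined with the $+2f_iT_{00}$ coming from the $-\tfrac{6}{5}(S-2)|\nabla f|^2 f_i$ term that one obtains the $-6f_iT_{00}$ visible in $(\ast\ast)$. Since your stated $(\ast\ast)$ is correct, this is purely an expository slip and not a gap.

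Your route genuinely differs from the paper's. The paper does not invoke Lemma~\ref{l:5-3} (and hence not the curvature formula \eqref{e:genR}) at this stage; instead it computes $\nabla^3 f(\nabla f,\nabla f,\xi_i)$ in two ways, once by differentiating the identity $\nabla^2 f(\nabla f,\xi_i)=0$ (which follows from \eqref{e:nabla2f2} and $T_{i0}=0$) and once by specializing \eqref{e:nabla3f1}. Your approach is slicker in that it reuses the already-packaged identity \eqref{e:NablaTsym}, avoiding a fresh differentiation; the paper's approach is more elementary in that it needs only Lemma~\ref{e:old} and \eqref{e:nablaT0}, not the heavier Lemma~\ref{l:5-3}. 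Both collapse to the same $2\times 2$ linear system in $T_{i0;0}$ and $\sum_r f_rT_{ir}-f_iT_{00}$.
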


\begin{proof}
\noindent From \eqref{e:nabla2f1} and \eqref{e:T0Xf} we have
\begin{equation}\label{e:nabla2f2}
\nabla^2 f(X,\xi_i)= df(I_i X) - \frac{2}{3}T^0(I_i X, \nabla f)  - \frac{2}{3}T^0(X,I_i \nabla f).
\end{equation}
By \eqref{e:T0i} we have $T_{i0}=0$, hence   the above identity shows $\nabla^2 f(\nabla f, \xi_i) = 0$. Therefore, we have
\begin{equation}\label{e:4}
\nabla^2 f(\nabla f, \xi) = \nabla^2 f(\nabla f, \nabla_A \xi)=0, \quad \xi\in V, A\in TM,
\end{equation}
taking into account \eqref{e:Biq}. The covariant derivative along $\nabla f$ of the identity $\nabla^2 f(\nabla f, \xi_i) = 0$  gives
\begin{multline*}
0=\nabla^3 f(\nabla f, \nabla f, \xi_i) -f\,\nabla^2 f(\nabla f, \xi_i)-\sum_{s=1}^3f_s\nabla^2 f(I_s\nabla f, \xi_i)-\nabla^2 f(\nabla f, \nabla_{\nabla f}\xi_i)\\
=\nabla^3 f(\nabla f, \nabla f, \xi_i) -\sum_{s=1}^3f_s\left[df(I_i I_s\nabla f) - \frac{2}{3}T^0(I_i I_s\nabla f, \nabla f)  - \frac{2}{3}T^0(I_s\nabla f,I_i \nabla f)\right]\\
= \nabla^3 f(\nabla f, \nabla f, \xi_i) + |\nabla f|^2 f_i + \frac{2}{3}[- f_i T_{00} +f_i T_{ii} + f_j T_{ji} +f_k T_{ki} ]
\end{multline*}
 using  the Hessian equation \eqref{e:Hessf}, \eqref{e:4}, $T_{i0}=0$ by \eqref{e:T0i}, and \eqref{e:nabla2f2}.
However, it follows from \eqref{e:nablaT0} that
\begin{equation}\label{e:Ti0,0}
T_{i0;0} = -f_i T_{00} + f_i T_{ii} + f_j T_{ji} + f_k T_{ki},
\end{equation}
hence,
\begin{equation}\label{e:5}
\nabla^3 f(\nabla f, \nabla f, \xi_i) + |\nabla f|^2 f_i + \frac{2}{3}T_{i0;0} = 0.
\end{equation}

On the other hand, from \eqref{g:1} we have
\begin{multline}\label{e:6}
\nabla^3 f(\nabla f,\nabla f,\xi_i) =  -\frac{1}{5}(1+2S)|\nabla f|^2f_i \\
 +\frac{2}{3}\left[ fT^0(\nabla f, I_i \nabla f)- {f_i}T^0(\nabla f,  \nabla f)+ {f_j}T^0(\nabla f, I_k \nabla f)-{f_k}T^0(\nabla f, I_j \nabla f) \right]\\
+\frac{2}{5}dS(\nabla f)df(I_i \nabla f)-\frac{2}{3}\nabla T^0(\nabla f, \nabla f, I_i \nabla f)
=-\frac{1}{5}(1+2S)|\nabla f|^2f_i - \frac{2}{3}f_iT_{00}-\frac{2}{3}T_{i0;0}
\end{multline}
using $T_{i0}=0$ by \eqref{e:T0i} to obtain the last equality.
Now, \eqref{e:5} and \eqref{e:6} give
\begin{equation}\label{e:3}
\nabla^3 f (\nabla f, \nabla f, \xi_i) = - \frac{1}{2}T_{i0;0} - |\nabla f|^2 f_i .
\end{equation}
A substitution of \eqref{e:3} into \eqref{e:5} shows \eqref{e:T0i00}, which together with \eqref{e:Ti0,0} give \eqref{e:T0i0}.
\end{proof}

\subsubsection{{\bf{The components}} \boldmath$T_{ij;0}$ {\bf{and the qc-Ricci $2$-forms}}} At this stage, from \eqref{e:nablaT0} we can compute the components $T_{\alpha\beta;\gamma}$ only when either $\alpha = 0$ or $\beta = 0$. However, by evaluating \eqref{e:NablaT} on the $\{\sigma_\alpha\}_{\alpha =0}^3$ frame, cf. Lemma \ref{elliplem}, we will be able to use the components $T_{0i;j}$ to determine not only  $T_{ij; 0}$, but also the qc-Ricci $2$-forms $\rho_s$ defined in \eqref{qc-Ric}. We will use  the following identities for the qc-Ricci $2$-forms, cf. \cite[Theorem $3.1$]{IV} or \cite[Theorem $4.3.11$]{IV2},
\begin{equation}\label{e:2.12}
\begin{aligned}
18\rho_s(\xi_s, X) & = 3dS(X) + \frac{1}{2}\nabla T^0({e_\gamma}, {e_\gamma}, X) - \frac{3}{2} \nabla T^0({e_\gamma}, I_s {e_\gamma}, I_s X), \\
18\rho_i(\xi_j, I_k X) & =-18\rho_i(\xi_k, I_j X)= 3dS(X) - \frac{5}{2}\nabla T^0({e_\gamma}, {e_\gamma}, X) -\frac{3}{2}\nabla T^0({e_\gamma}, I_i {e_\gamma}, I_i X).
\end{aligned}
\end{equation}
We begin by using \eqref{e:NablaTsym} and the symmetry of the $T_{\alpha\beta;\gamma}$ in the first two indices to prove the following Lemma.
\begin{lemma}
%With the assumptions of Theorem \ref{t:mainpan}, if $f$ satisfies \eqref{e:Hessf} then
We have
\begin{equation}\label{e:rhokjj2}
 \rho_k(I_j \nabla f, \xi_j) = -\frac{3}{5}(S-2)f_k,
\end{equation}
\begin{equation}\label{e:Tij0}
T_{ij;0} = \frac{1}{4}[ f_k(T_{ii} - T_{jj}) + f_j T_{kj} - f_i T_{ki}].
\end{equation}
\end{lemma}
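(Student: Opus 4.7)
The plan is to derive two scalar equations for $T_{ij;0}$ by evaluating \eqref{e:NablaTsym} at $(Y,X)=(\nabla f, I_i\nabla f)$ (with the formula's cyclic triple taken to be $(j,k,i)$) and then at $(Y,X)=(\nabla f, I_j\nabla f)$ (with cyclic triple $(i,j,k)$), and to solve the resulting $2\times 2$ linear system in the unknowns $T_{ij;0}$ and $\rho_k(I_j\nabla f,\xi_j)$.

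For each of the two choices, the first bracket $-3[\nabla T^0(\nabla f, I_a Y, X) + \nabla T^0(\nabla f, Y, I_a X)]$ collapses to $-3T_{ij;0}$: one summand equals $T_{ij;0}$ by the symmetry of $\nabla T^0$ in its last two slots, while the other equals $\pm T_{0k;0}=0$ by \eqref{e:T0i00}. Moved to the LHS, this supplies the coefficient $4$ in front of $T_{ij;0}$, so the equation has the form $4T_{ij;0}+T_{0\beta;\gamma}=(\text{RHS})$, where $T_{0\beta;\gamma}$ is the explicitly known quantity computed from \eqref{e:nablaT0}. The $dS$-, $g$- and $\omega$-terms in \eqref{e:NablaTsym} vanish because $df(I_s\nabla f)=0$ and $g(\nabla f,I_s\nabla f)=0$, and many of the $T^0(\cdots)$ terms collapse using $T^0(\nabla f, I_s\nabla f)=T_{s0}=0$ from \eqref{e:T0i}. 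In the final $\Gamma$-block only a single $df(I_\bullet\cdot)$-factor survives in each case, producing $12|\nabla f|^2\rho_k(I_j\nabla f,\xi_j)$ in the first equation and $-12|\nabla f|^2\rho_k(I_i\nabla f,\xi_i)$ in the second.

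The crucial observation is that by the last line of \eqref{e:2.10} these two $\rho$-values coincide: $\rho_k(I_iX,\xi_i)=\rho_k(I_jX,\xi_j)$, evaluated at $X=\nabla f$. Adding the two equations therefore eliminates the $\rho$-term; after substituting the explicit values of $T_{0j;i}$ and $T_{0i;j}$ obtained from \eqref{e:nablaT0} (and simplified using the trace relation \eqref{e:Taa} together with $T_{00}=-\tfrac{3}{5}(S-2)|\nabla f|^2$, which follows from \eqref{e:T0Xf} at $X=\nabla f$), the $T_{kk}$- and $T_{00}$-contributions cancel and leave precisely \eqref{e:Tij0}. Subtracting the two equations isolates $\rho_k(I_j\nabla f,\xi_j)$; using \eqref{e:T0i0} to absorb the combination $f_iT_{ki}+f_jT_{kj}$ reduces the right side to $f_kT_{00}/|\nabla f|^2=-\tfrac{3}{5}(S-2)f_k$, yielding \eqref{e:rhokjj2}.

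The main obstacle is the bookkeeping: each product $I_aI_b$ must be resolved via $I_iI_j=I_k$ and $I_\alpha^2=-\mathrm{id}$, and the many resulting $T_{\alpha\beta}$ coefficients must be tracked through to verify that the $T_{kk}$- and $T_{00}$-terms from different sources cancel exactly. Conceptually, the indispensable step is the identification of the two $\rho$-unknowns via \eqref{e:2.10}; without it the $2\times 2$ system has rank one and the two conclusions of the lemma remain entangled.
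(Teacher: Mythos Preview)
Your proposal is correct and follows essentially the same route as the paper: both arguments specialize the symmetric identity \eqref{e:NablaTsym} at the two pairs $(\nabla f,I_j\nabla f)$ with cyclic triple $(i,j,k)$ and $(\nabla f,I_i\nabla f)$ with cyclic triple $(j,k,i)$ (your swap of $X$ and $Y$ relative to the paper is immaterial by symmetry), then use the key identity $\rho_k(I_i\nabla f,\xi_i)=\rho_k(I_j\nabla f,\xi_j)$ from \eqref{e:2.10} so that adding the two relations kills the $\rho$-contribution and yields \eqref{e:Tij0}, while subtracting isolates $\rho_k(I_j\nabla f,\xi_j)$ and, after the simplification via \eqref{e:T0i0} and \eqref{e:Taa}, gives \eqref{e:rhokjj2}. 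The only cosmetic difference is the order in which the addition and subtraction are performed.
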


\begin{proof}
Letting $Y = I_j\nabla f$ and $X = \nabla f$ in \eqref{e:NablaTsym} we have, taking into account $T_{i0}=0$ and $T_{i0;0} = 0$ by \eqref{e:T0i} and  \eqref{e:T0i00}, respectively, the following identity
\begin{multline*}
T_{i0;j}+T_{j0;i}=-3T_{ji;0}+fT{ji}+6f_iT_{ki}+7f_jT_{jk}+f_k(-3T_{ii}+3T_{kk}-4T_{jj}+4T_{00})\\
-12|\nabla f|^2 \rho_k(I_i \nabla f, \xi_i).
\end{multline*}
From \eqref{e:nablaT0} we can find another formula for $T_{i0;j}+T_{j0;i}$, which together with the above identity and  \eqref{e:T0i0} gives
\begin{equation}\label{e:67.5}
T_{ji;0} = -3|\nabla f|^2 \rho_k(I_i \nabla f, \xi_i) + \frac{3}{2}f_k T_{00} - [f_k T_{jj} - f_j T_{kj}]  + \frac{1}{2}[f_i T_{ki} - f_k T_{ii}].
\end{equation}

On the other hand, by first taking  \eqref{e:NablaTsym} for $j$, and  then working as above but using $Y = I_i\nabla f$ and $X = \nabla f$ we obtain the identity
\begin{equation}\label{e:67}
T_{ij;0} = 3|\nabla f|^2\rho_k(I_j \nabla f, \xi_j) - \frac{3}{2}f_k T_{00}  +[ f_k T_{ii} - f_i T_{ki}] - \frac{1}{2} [f_j T_{kj} - f_k T_{jj}].
\end{equation}
Therefore, the symmetry of $T_{ij;0}$ in $i$ and $j$, together with the last line in \eqref{e:2.10}, \eqref{e:Taa} and \eqref{e:T0i0}  give
\begin{multline*}
0 = T_{ij;0} - T_{ji;0} =3|\nabla f|^2\left[\rho_k(I_j \nabla f, \xi_j) + \rho_k(I_i \nabla f, \xi_i)\right] - 3f_kT_{00} + \frac{3}{2}\left[f_k(T_{ii} + T_{jj}) - f_i T_{ki} - f_j T_{kj}\right] \\
= 6|\nabla f|^2 \rho_k(I_j \nabla f, \xi_j) - 6f_kT_{00},
\end{multline*}
which, by \eqref{e:T0Xf}, implies \eqref{e:rhokjj2}. Similarly, \eqref{e:67.5} and \eqref{e:67} yield
\[
2 T_{ij;0} = T_{ij;0} + T_{ji;0} = 3|\nabla f|^2\left[\rho_k(I_j \nabla f, \xi_j) - \rho_k(I_i \nabla f, \xi_i)\right] + \frac{1}{2}[ f_k(T_{ii} - T_{jj}) + f_j T_{kj} - f_i T_{ki}].
\]
By \eqref{e:2.10} the term in the first brackets is zero, hence  we conclude \eqref{e:Tij0}.
\end{proof}

\subsubsection{The components $T_{ii;0}$ and the vertical Hessian of $f$} With the results of the previous section, we can begin to determine the components of $dS$. In particular, we can now show that one of the components of $dS|_H$ vanishes.

\begin{lemma}
%With the assumptions of Theorem \ref{t:mainpan}, if $f$ satisfies \eqref{e:Hessf} and
The normalized qc-scalar curvature $S$ satisfies the following relations  at almost every point of $M$
\begin{equation}\label{e:dSnabf}
dS(\nabla f) = 0, \qquad T_{00;0} = 0, \quad\text{and}\quad T_{ii;0} = \frac{1}{2}[f_jT_{ki} - f_k T_{ji}].
\end{equation}

\end{lemma}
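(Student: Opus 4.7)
My plan is to collapse the three claimed identities to showing $T_{00;0}=0$, then to kill that by a parallel-Casimir trace identity. First, I would apply \eqref{e:nablaT0} with $Y=X=\nabla f$: the middle bracket vanishes because $T_{00}+\tfrac{3}{5}(S-2)|\nabla f|^2=0$ by \eqref{e:T0Xf}, and each summand in the $f_s$-sum is zero since $T^0(I_s\nabla f,\nabla f)=T_{s0}=0$ by \eqref{e:T0i} and $\omega_s(\nabla f,\nabla f)=0$. Only the $dS$-term survives, yielding
\[
T_{00;0}=-\tfrac{3}{5}\,dS(\nabla f)\,|\nabla f|^2,
\]
so the first two equalities in the statement are equivalent, and both reduce to $T_{00;0}=0$.

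Next, I would compute the auxiliary quantity $T_{i0;i}=\nabla T^0(I_i\nabla f,I_i\nabla f,\nabla f)$ from \eqref{e:nablaT0} with $Y=X=I_i\nabla f$. Using $I_i^2=-\id$, $I_jI_i=-I_k$, $I_kI_i=I_j$, the identities $T_{s0}=0$, $df(I_s\nabla f)=0$, $g(I_s\nabla f,I_t\nabla f)=\delta_{st}|\nabla f|^2$, and the rewriting $\tfrac{3}{5}(S-2)|\nabla f|^2=-T_{00}$, only a handful of terms survive and one arrives at
\[
T_{i0;i}=f(T_{ii}-T_{00})-f_j T_{ki}+f_k T_{ji}.
\]

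The technical heart is then to apply the symmetric identity \eqref{e:NablaTsym} with $Y=I_i\nabla f$ and $X=\nabla f$. Its LHS becomes $T_{i0;i}+T_{ii;0}$. On the RHS, the $g(Y,X)f_i$ term drops out since $g(I_i\nabla f,\nabla f)=0$; the $f_i$-bracket vanishes because $T^0(-\nabla f,I_i\nabla f)=-T_{0i}=0$ and $T^0(I_i\nabla f,\nabla f)=T_{i0}=0$; after expanding with the quaternionic relations the $f_j$- and $f_k$-brackets collapse to $f_j T_{ki}-f_k T_{ji}$; the final bracket involving the $\rho_s$'s vanishes because all of $df(I_j\nabla f)$, $df(I_k\nabla f)$, $df(I_j(I_i\nabla f))$, $df(I_k(I_i\nabla f))$ are zero; and the surviving $-\tfrac{3}{5}dS(\nabla f)|\nabla f|^2$ equals $T_{00;0}$ by the first display. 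After substituting the formula for $T_{i0;i}$ and simplifying, I expect to reach
\[
T_{ii;0}=T_{00;0}+\tfrac{1}{2}\bigl[f_j T_{ki}-f_k T_{ji}\bigr].
\]

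To close the loop I would differentiate the algebraic identity \eqref{e:Tss}. Since the endomorphism $\id\otimes\id+\sum_s I_s\otimes I_s$ is parallel for $\nabla$ --- the $sp(1)$-connection cross-terms in \eqref{e:Biq} cancel cyclically in pairs --- the derivative identity
\[
\nabla T^0(Z,X,Y)+\sum_{s=1}^3\nabla T^0(Z,I_sX,I_sY)=0
\]
holds for all $X,Y,Z$, and specializing at $X=Y=Z=\nabla f$ gives $T_{00;0}+\sum_{i=1}^3 T_{ii;0}=0$. Summing the previous display over $i=1,2,3$, the cyclic sum $\sum_i (f_j T_{ki}-f_k T_{ji})$ vanishes by the symmetry $T_{ij}=T_{ji}$, so $\sum_i T_{ii;0}=3T_{00;0}$. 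Comparing the two relations forces $4T_{00;0}=0$, from which everything follows: $dS(\nabla f)=0$ by the first display, and $T_{ii;0}=\tfrac{1}{2}(f_j T_{ki}-f_k T_{ji})$ by the third. The main obstacle I anticipate is the algebraic bookkeeping inside the \eqref{e:NablaTsym} step; the Casimir trace identity is what renders the resulting linear system in $T_{00;0}$ and $\sum_i T_{ii;0}$ non-degenerate.
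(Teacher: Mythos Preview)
Your proposal is correct and follows essentially the same route as the paper's proof: both compute $T_{00;0}=-\tfrac{3}{5}|\nabla f|^2 dS(\nabla f)$ and $T_{i0;i}$ from \eqref{e:nablaT0}, specialize \eqref{e:NablaTsym} at $Y=I_i\nabla f$, $X=\nabla f$ to obtain $T_{ii;0}=T_{00;0}+\tfrac{1}{2}[f_jT_{ki}-f_kT_{ji}]$, and then invoke the parallel-Casimir trace $T_{00;0}+\sum_s T_{ss;0}=0$ (the paper phrases this as the $Sp(1)Sp(1)$-invariance of \eqref{e:Taa}) together with the cyclic cancellation to force $T_{00;0}=0$. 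The only difference is expository: you first reduce all three claims to $T_{00;0}=0$ and then close the loop, whereas the paper carries the $dS(\nabla f)$ term through \eqref{e:Tii0} before eliminating it.
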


\begin{proof}
Letting $X = I_i\nabla f$ and $Y= \nabla f$  in \eqref{e:NablaTsym} and taking \eqref{e:T0i} into account, we have the identity
\begin{equation}\label{z:1}
\begin{aligned}
T_{ii;0} + T_{0i;i} = -3[T_{ii;0} - T_{00;0}] - \frac{3}{5}|\nabla f|^2 dS(\nabla f)  - f[T_{00} - T_{ii}] + f_j T_{ki} - f_k T_{ji}.
\end{aligned}
\end{equation}
From the formula for $\nabla T^0(Y,X,\nabla f)$ in \eqref{e:nablaT0} we can compute that
\begin{equation}\label{e:Ti0i} T_{i0;i} = f[T_{ii} - T_{00}] + f_k T_{ij} - f_j T_{ki} \end{equation}

\noindent and $5T_{00;0} = -3|\nabla f|^2 dS(\nabla f)$; using this and \eqref{e:Ti0i} in \eqref{z:1} we see that
\begin{equation}\label{e:Tii0}
T_{ii;0} = -\frac{3}{5}|\nabla f|^2dS(\nabla f) + \frac{1}{2}[f_jT_{ki} - f_k T_{ji}].
\end{equation}
On the other hand, from the $Sp(1)Sp(1)$-invariance of \eqref{e:Taa} it follows that
\[ T_{00;0} + T_{11;0} + T_{22;0} + T_{33;0} = 0
\]
which together with \eqref{e:Tii0} and \eqref{e:Ti0i} yield $|\nabla f|^2 dS(\nabla f) = 0$, hence \eqref{e:dSnabf}.
\end{proof}

Now we can  determine the components of the vertical Hessian of $f$, which will then be used in the proofs that the remaining components of $dS$ vanish and, eventually, in the final section that the torsion vanishes.

\begin{lemma}
With the assumptions of Theorem \ref{t:mainpan}, if $f$ satisfies \eqref{e:Hessf} then we have the following identities for the vertical Hessian of $f$,
\begin{equation}\label{e:vHessfii}
|\nabla f|^2 \nabla^2 f(\xi_i, \xi_i) =- \frac{1}{5}(1+2S)|\nabla f|^2 f - \frac{2}{3}[fT_{ii} + f_k T_{ij} - f_j T_{ki}],
\end{equation}
\begin{equation}\label{e:vHessfij}
|\nabla f|^2 \nabla^2 f(\xi_i, \xi_j) = -\frac{2}{3} f T_{ij} +\frac{1}{2}(4-S)|\nabla f|^2 f_k - \frac{11}{12}[ f_i T_{ki} - f_k T_{ii}] -\frac{1}{4}[ f_j T_{kj} - f_k T_{jj}],
\end{equation}
\begin{equation}\label{e:vertric}
\nabla^2 f(\xi_i, \xi_j) - \nabla^2 f(\xi_j, \xi_i) = \frac{2}{5}(3+S)f_k.
\end{equation}
\end{lemma}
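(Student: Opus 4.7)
The three identities divide naturally into two groups. The antisymmetric relation \eqref{e:vertric} comes from a Ricci-type identity for the vertical Hessian, while the diagonal and off-diagonal components in \eqref{e:vHessfii} and \eqref{e:vHessfij} result from evaluating the key Lemma \ref{l:5-3} at two specific horizontal pairs that isolate a single vertical Hessian component.

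For \eqref{e:vertric}, I would apply the function-valued Ricci identity $\nabla^2 f(\xi_i, \xi_j) - \nabla^2 f(\xi_j, \xi_i) = -df(T(\xi_i, \xi_j))$. Inserting $T(\xi_i, \xi_j) = -S\xi_k - [\xi_i, \xi_j]|_H$ from \eqref{e:2.10}, the vertical part contributes $Sf_k$ and the horizontal part contributes $df([\xi_i,\xi_j]|_H) = g([\xi_i,\xi_j]|_H, \nabla f)$. The last line of \eqref{e:2.10} identifies the latter with $\rho_k(I_i \nabla f, \xi_i) = \rho_k(I_j \nabla f, \xi_j)$, and \eqref{e:rhokjj2} then evaluates it as $-\frac{3}{5}(S-2)f_k$. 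Combining gives $\frac{2}{5}(S+3)f_k$ as claimed.

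For \eqref{e:vHessfii} and \eqref{e:vHessfij}, the key observation is that $\omega_s(\nabla f, I_m \nabla f) = \delta_{sm}|\nabla f|^2$, so the sum $12\sum_s \nabla^2 f(\xi_i, \xi_s)\omega_s(Y,X)$ in \eqref{e:NablaT} collapses to a single term when $(Y,X) = (\nabla f, I_i \nabla f)$, isolating $12\nabla^2 f(\xi_i, \xi_i)|\nabla f|^2$ (for \eqref{e:vHessfii}), and when $(Y,X) = (\nabla f, I_j \nabla f)$, isolating $12\nabla^2 f(\xi_i, \xi_j)|\nabla f|^2$ (for \eqref{e:vHessfij}). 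For each substitution, every other term on the right-hand side reduces to already-known quantities: the quadratic torsion terms become components $T_{\alpha\beta}$ with $T_{0s}=0$ by \eqref{e:T0i}; the left-hand cubic derivatives $T_{ij;0}$, $T_{ii;0}$, $T_{i0;i}$, $T_{i0;j}$ are supplied by \eqref{e:nablaT0}, \eqref{e:dSnabf}, \eqref{e:Tij0}, \eqref{e:Ti0i}; and the $\Gamma^i$ contraction reduces, via the cyclic identity for the qc-Ricci $2$-forms in \eqref{e:2.10} together with \eqref{e:rhokjj2}, to a multiple of $f_k|\nabla f|^2$. Solving each resulting linear equation for the isolated $\nabla^2 f(\xi_i, \xi_\bullet)|\nabla f|^2$ produces the stated formulas.

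The main obstacle is the coefficient reconciliation in \eqref{e:vHessfij}. There the $f_k|\nabla f|^2$ coefficient coming from the combination of the $-\frac{12}{5}(1+2S)$ and $-12\Gamma^i$ contributions is $\frac{12(4S-13)}{5}$, and the expansion produces a residual $-3f_k T_{00}$; neither matches the target coefficient $6(4-S)$ on $f_k|\nabla f|^2$. The resolution is to apply \eqref{e:T0Xf} at $X = \nabla f$, which yields $T_{00} = -\frac{3}{5}(S-2)|\nabla f|^2$ and hence converts the leftover $-3f_k T_{00}$ into exactly $\frac{9}{5}(S-2)f_k|\nabla f|^2$, the scalar correction needed to turn $\frac{12(4S-13)}{5}$ into $6(4-S)$. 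Thus it is the extremal identity \eqref{e:T0Xf} — a direct consequence of the sharpness of the Lichnerowicz bound together with the vanishing $P$-form — that closes the algebra and makes the final coefficients of \eqref{e:vHessfij} agree with the target.
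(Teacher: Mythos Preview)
Your approach is essentially identical to the paper's: for \eqref{e:vertric} you use the Ricci identity $\nabla^2 f(\xi_i,\xi_j)-\nabla^2 f(\xi_j,\xi_i)=-df(T(\xi_i,\xi_j))$ together with \eqref{e:2.10} and \eqref{e:rhokjj2}, and for \eqref{e:vHessfii}, \eqref{e:vHessfij} you substitute $(Y,X)=(\nabla f,I_i\nabla f)$ and $(\nabla f,I_j\nabla f)$ into \eqref{e:NablaT} so that the sum $12\sum_s\nabla^2 f(\xi_i,\xi_s)\omega_s(Y,X)$ collapses to a single vertical Hessian entry, then reduce the remaining pieces via \eqref{e:nablaT0}, \eqref{e:Tij0}, \eqref{e:dSnabf}, \eqref{e:rhokjj2}, and finally \eqref{e:T0Xf}. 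The paper does exactly this, including the crucial use of $T_{00}=-\tfrac{3}{5}(S-2)|\nabla f|^2$ to close the $f_k|\nabla f|^2$ coefficient.

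There is one arithmetic slip in your final paragraph. The residual $T_{00}$ contribution in the $(\nabla f,I_j\nabla f)$ specialization is $-6f_kT_{00}$, not $-3f_kT_{00}$: besides the $+3f_kT_{00}$ appearing in the $f_k$-line on the right of \eqref{e:NablaT}, there is a second $f_kT_{00}$ hidden in the left-hand side through $-3\nabla T^0(X,Y,I_i\nabla f)=-3T_{0i;j}$, since \eqref{e:nablaT0} gives $T_{0i;j}=fT_{ij}+f_iT_{ki}+f_k(T_{00}-T_{ii})$. With the correct coefficient one gets $-6f_kT_{00}=\tfrac{18}{5}(S-2)f_k|\nabla f|^2$, and then
\[
-\frac{12(4S-13)}{5}+\frac{18}{5}(S-2)=\frac{120-30S}{5}=6(4-S),
\]
which is the target. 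Your stated combination $\tfrac{12(4S-13)}{5}+\tfrac{9}{5}(S-2)$ does not equal $6(4-S)$, so the discrepancy you flagged is an artifact of the missing $T_{00}$ term rather than a genuine obstacle. The method itself is correct and matches the paper.
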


\begin{proof}
First, letting $X = I_i\nabla f$ and $Y = \nabla f$ in \eqref{e:NablaT} and taking into account $\Gamma^i(\nabla f, I_i \nabla f) =0$ from \eqref{e:gamma} we have

\begin{multline*}
5T_{ii;0} - 3T_{i0;i} = 3T_{00;0} - 3T_{ii;0} - 3|\nabla f|^2 dS(\nabla f) + 5fT_{ii} + 3fT_{00} \\
+ 12|\nabla f|^2\nabla^2 f(\xi_i, \xi_i)  + \frac{12}{5}(1+2S)|\nabla f|^2 f + f_k T_{ij} - f_j T_{ki}.
\end{multline*}
Using the formulas in \eqref{e:dSnabf} and \eqref{e:Ti0i}, we can expand the above
\begin{multline}
\frac{5}{2}[f_jT_{ki} - f_k T_{ji}] - 3(f[T_{ii} - T_{00}] + f_k T_{ij} - f_j T_{ki}) =  -\frac{3}{2}[f_j T_{ki} - f_k T_{ji}]+ 5fT_{ii} + 3fT_{00} \\+ 12|\nabla f|^2 \nabla^2 f(\xi_i , \xi_i) + \frac{12}{5}(1+2S)|\nabla f|^2 f + f_k T_{ij} - f_j T_{ki}
\end{multline}
and then solve this for the vertical Hessian of $f$ to obtain \eqref{e:vHessfii}. Next, let $X = I_j\nabla f$ and $Y = \nabla f$ in \eqref{e:NablaT}; then use that $\Gamma^i(\nabla f, I_j \nabla f) = 2|\nabla f|^2\rho_k(I_i \nabla f, \xi_i)$ from \eqref{e:gamma} to see
\begin{multline*}
5T_{ji;0} - 3T_{0i;j} = -3T_{0k;0} - 3 T_{ij;0} + 5fT_{ji} + 12|\nabla f|^2\nabla^2 f(\xi_i ,\xi_j) -\frac{12}{5}(1+2S)|\nabla f|^2 f_k \\
-24|\nabla f|^2\rho_k(I_i\nabla f, \xi_i)  + 6f_i T_{ki} + 5f_j T_{jk} -6f_k T_{ii} - 5 f_k T_{jj} + 3 f_k T_{00}.
\end{multline*}
From \eqref{e:nablaT0} with $Y = I_j \nabla f$, $X= I_i \nabla f$ we can compute that $T_{0i;j} = fT_{ij} + f_i T_{ki} + f_k[T_{00} - T_{ii}]$. Then this, along with $T_{i0;0}=0$ from \eqref{e:T0i00}, the formula for $\rho_k(I_j\nabla f, \xi_j)$ in \eqref{e:rhokjj2}, and for $T_{ij;0}$ in \eqref{e:Tij0}, applied to the above gives
\begin{multline}
\frac{5}{4}[f_k(T_{ii} - T_{jj}) + f_j T_{kj} - f_i T_{ki}] - 3[fT_{ij} + f_i T_{ki} + f_k (T_{00} - T_{ii})] \\= - \frac{3}{4}[f_k(T_{ii} - T_{jj}) + f_j T_{kj} - f_i T_{ki}] + 5fT_{ji} + 12|\nabla f|^2 \nabla^2 f(\xi_i , \xi_j) - \frac{12}{5}(1+2S)|\nabla f|^2 f_k \\+ \frac{72}{5}(S-2)|\nabla f|^2f_k +6 f_i T_{ki} + 5f_j T_{jk} - 6 f_k T_{ii} - 5 f_k T_{jj} + 3 f_k T_{00}.
\end{multline}
Using that $5T_{00} = - 3(S-2)|\nabla f|^2$ from \eqref{e:T0Xf} and solving the above for $|\nabla f|^2 \nabla^2 f(\xi_i, \xi_j)$ yields \eqref{e:vHessfij}.

Finally, recall that $\{\xi_s\}_{s=1}^3$ is an orthornormal frame for $V$  with respect to the Riemannian metric \eqref{e:h metric}. With this, and the orthonormal frame $\{\sigma_\alpha\}_{\alpha = 0}^3$ for $H$, we can expand
\[
T(\xi_i, \xi_j) =|\nabla f|^{-2}\sum_{\alpha=0}^3 h(T(\xi_i, \xi_j), I_\alpha \nabla f)I_\alpha \nabla f + \sum_{s=1}^3 h(T(\xi_i, \xi_j),\xi_s)\xi_s.
\]
By the last two lines of \eqref{e:2.10} we have $h(T(\xi_i, \xi_j), I_\alpha \nabla f) = -\rho_k(I_i I_\alpha \nabla f, \xi_i)$ and $h(T(\xi_i,\xi_j),\xi_s) = -S \delta_{ks}$. Thus, \eqref{e:rhokjj2} and the Ricci identity $\nabla^2 f(\xi_i, \xi_j) - \nabla^2 f(\xi_j, \xi_i) = -df(T(\xi_i, \xi_j))$, shows \eqref{e:vertric}.
\end{proof}

\subsection{The qc-scalar curvature is constant}\label{ss:S is const} Here we obtain first a formula for the horizontal part $dS|_H$ of the differential of $S$ and therefore one for the horizontal Hessian $\nabla^2 S(X,Y)$ as well. The latter will then be used to show that $dS|_V = dS|_H = 0$ and allow us to conclude that $S$ is constant.

Several divergences of the torsion tensor $T^0$ will appear in the next calculations, so we remind the notation set in \eqref{e:def of div}. In particular, we will use that if  $\alpha\neq 0 $ then $\nabla_\alpha^*T^0(X) = -\nabla T^0(I_\alpha {e_\gamma}, {e_\gamma}, X).$

\begin{lemma}
%With the assumptions of Theorem \ref{t:mainpan}, if $f$ satisfies \eqref{e:Hessf} and $S$ is the normalized %qc-scalar curvature, then
The next identity holds at almost every point of $M$,
\begin{equation}\label{e:dSi}
dS(I_t \nabla f) = -2(S-2)f_t, \qquad t = 1,2,3.
\end{equation}
\end{lemma}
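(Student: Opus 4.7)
The plan is to apply identity \eqref{e:NablaT} from Lemma \ref{l:5-3} with a choice of $Y$ and $X$ for which $dS(I_t\nabla f)$ appears nontrivially on the right-hand side, and then evaluate every other term using the structural lemmas derived in the preceding sections. For a fixed cyclic triple $(i,j,k)$ and $t = k$, the substitution $Y = I_i \nabla f$, $X = I_k \nabla f$ produces $df(I_i Y) = -|\nabla f|^2$ while $df(I_i X) = 0$, so the right-hand side of \eqref{e:NablaT} acquires a term $\tfrac{9}{5}|\nabla f|^2\, dS(I_k\nabla f)$, which is precisely the quantity we aim to isolate. The left-hand side becomes the combination $5T_{ki;i} - 3T_{ii;k}$; since neither component has yet been determined directly, I would exploit the symmetry of $T^0$ in its two horizontal slots to rewrite $T_{ii;k}$ and $T_{ki;i}$ in forms to which \eqref{e:nablaT0} applies after permuting arguments, thereby reducing them to the already known building blocks $T_{\alpha 0;\gamma}$ and $T_{ij;0}$.

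The remaining terms on the right-hand side of \eqref{e:NablaT} are computable from the preceding results: the torsion-derivative components from \eqref{e:nablaT0}, \eqref{e:Tij0}, \eqref{e:dSnabf}, and $T_{i0;0}=T_{00;0}=0$; the vertical Hessians from \eqref{e:vHessfii}--\eqref{e:vertric}; and the qc-Ricci $2$-form contributions hidden inside $\Gamma^i(Y,X)$ from \eqref{e:rhokjj2} together with its analogue $\rho_j(I_i\nabla f,\xi_i) = -\tfrac{3}{5}(S-2)f_j$, which follows by applying \eqref{e:2.10} to the cyclic triple $(k,i,j)$ and using \eqref{e:vertric} in a computation that mirrors the derivation of \eqref{e:rhokjj2}. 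Substituting these expressions, and repeatedly invoking $T_{00} = -\tfrac{3}{5}(S-2)|\nabla f|^2$, the relation \eqref{e:T0i0}, and the tracelessness identity \eqref{e:Taa}, I expect the resulting equation to collapse to the stated formula $dS(I_t\nabla f) = -2(S-2)f_t$. The cases $t=i$ and $t=j$ are handled analogously by permuting the role of $t$ inside the cyclic triple $(i,j,k)$.

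The main obstacle is that many natural substitutions in \eqref{e:NablaT} yield tautologies: because \eqref{e:NablaT} itself was derived by combining \eqref{e:nablaT0}, the third-order Ricci identity, and the general curvature formula \eqref{e:genR}, several choices of $(X,Y)$ just recover already-known identities rather than producing new constraints. Identifying a substitution that couples horizontal data ($\nabla f, I_\alpha\nabla f$) with vertical data ($f_s$ and $\nabla^2 f(\xi_i,\xi_j)$) in a genuinely nondegenerate way is therefore the crux; it is precisely the $\Gamma^i$ contribution, evaluated through the qc-Ricci $2$-form identities, that prevents collapse to a tautology and permits the extraction of $dS(I_t\nabla f)$. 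The claim is valid \emph{at almost every point} because the decomposition into components $T_{\alpha\beta}, T_{\alpha\beta;\gamma}$ relies on the orthonormal frame of Lemma \ref{elliplem}, which is defined off the zero set of $\nabla f$.
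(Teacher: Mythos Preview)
Your substitution $(Y,X)=(I_i\nabla f,\,I_k\nabla f)$ in \eqref{e:NablaT} does isolate a $dS(I_k\nabla f)$ term, but the proposal breaks down at the two places you flag as routine. First, the left-hand side becomes $5T_{ki;i}-3T_{ii;k}$, and the symmetry $T_{\alpha\beta;\gamma}=T_{\beta\alpha;\gamma}$ does \emph{not} reduce these to the known building blocks $T_{\alpha 0;\gamma}$ or $T_{\alpha\beta;0}$: none of the three indices in $T_{ki;i}$ or $T_{ii;k}$ is zero, and permuting the first two cannot create one. So you have two genuinely new unknowns on the left. Second, a direct computation of $\Gamma^i(I_i\nabla f,I_k\nabla f)$ from \eqref{e:gamma} gives $-|\nabla f|^2\rho_k(I_i\nabla f,\xi_i)+|\nabla f|^2\rho_j(\nabla f,\xi_i)$. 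The second summand is $\rho_j(\nabla f,\xi_i)$, not $\rho_j(I_i\nabla f,\xi_i)$, and this quantity is \emph{not} covered by \eqref{e:rhokjj2} or its cyclic analogues; by \eqref{e:2.12} it equals a combination of $dS(I_k\nabla f)$ and the divergences $\nabla^*T^0(I_k\nabla f)$, $\nabla_j^*T^0(I_i\nabla f)$, none of which are yet known.

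The paper avoids both obstacles by taking \emph{traces} of \eqref{e:NablaT} (setting $X=e_\gamma$, $Y=I_\alpha e_\gamma$ and summing) rather than evaluating at frame vectors. Tracing turns the left-hand side into divergences $\nabla_\alpha^*T^0(I_i\nabla f)$, and the $\Gamma^i$ trace becomes a combination of $\rho_j(I_j\nabla f,\xi_i)+\rho_k(I_k\nabla f,\xi_i)$ that \eqref{e:2.12} expresses again in terms of divergences and $dS$. One then feeds the known value \eqref{e:rhokjj2} into the second line of \eqref{e:2.12} to get a linear equation relating $dS(I_i\nabla f)$ to $\nabla^*T^0(I_i\nabla f)$ and $\nabla_i^*T^0(\nabla f)$; the traced identities \eqref{e:NablaT} with $\alpha=0,j,k$ (together with \eqref{e:vertric}) close the system. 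Pointwise substitutions of the kind you propose only become effective \emph{after} $S$ is shown to be constant, when the $dS$ and divergence terms drop out---which is exactly how the paper uses them in the final section.
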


\begin{proof}
With \eqref{e:rhokjj2}, the last line of \eqref{e:2.10}, and second line of \eqref{e:2.12}, we arrive at the identity
\begin{equation}\label{e:rhoikk1}
-\frac{3}{5}(S-2) f_i = \rho_i(I_k \nabla f, \xi_k) = -\frac{1}{6}dS(I_i\nabla f) + \frac{5}{36}\nabla^* T^0(I_i \nabla f) - \frac{1}{12}\nabla_i^* T^0(\nabla f).
\end{equation}
By \eqref{e:nablaT0} and the fact that $T^0$ is completely trace-free we have the identity
\begin{equation}\label{e:diviT0}
\nabla_i^* T^0(\nabla f) = \frac{3}{5}[dS(I_i \nabla f) + 4(S-2)f_i].
\end{equation}
Therefore, we need only determine $\nabla^* T^0(I_i \nabla f)$. For this, take the trace $X = {e_\gamma}$, $Y= I_\alpha {e_\gamma}$ in \eqref{e:NablaT}:
\begin{multline}\label{e:NablaT01}
5\nabla T^0(I_\alpha {e_\gamma}, {e_\gamma}, I_i \nabla f) - 3\nabla T^0({e_\gamma}, I_\alpha {e_\gamma}, I_i \nabla f) = 3dS(I_\alpha {e_\gamma})df(I_i {e_\gamma}) - \frac{9}{5}dS({e_\gamma})df(I_i I_\alpha {e_\gamma}) \\
+ \frac{6}{5}(4+3S)g(I_\alpha {e_\gamma}, {e_\gamma})f_i -\frac{12}{5}(1+2S)\left( f{\omega_i}({e_\gamma}, I_\alpha {e_\gamma}) + \sum_{s=1}^3 f_s\,{\omega_s}(I_\alpha {e_\gamma}, I_i {e_\gamma})\right) \\
+12 \sum_{s=1}^3 \nabla^2 f(\xi_i, \xi_s) {\omega_s}(I_\alpha {e_\gamma}, {e_\gamma}) - 12 \Gamma^i(I_\alpha {e_\gamma}, {e_\gamma}).
\end{multline}
For $\alpha = 0$ equation \eqref{e:NablaT01} becomes
\begin{equation}\label{e:divT0i1}
\nabla^* T^0(I_i \nabla f) = -\frac{3}{5}[dS(I_i \nabla f) + 4(S-2)f_i] - 12\Gamma^i({e_\gamma}, {e_\gamma}).
\end{equation}
Using \eqref{e:gamma} and \eqref{e:2.12} we see that
\begin{multline}\label{e:rhojrhok}
\Gamma^i({e_\gamma}, {e_\gamma}) = 2[\rho_j(I_j \nabla f, \xi_i) + \rho_k(I_k\nabla f, \xi_i)] = \frac{2}{3}dS(I_i \nabla f) - \frac{5}{9}\nabla^* T^0(I_i \nabla f)\\
 + \frac{1}{6}[\nabla_j^* T^0(I_k \nabla f) - \nabla_k^* T^0(I_j \nabla f)],
\end{multline}
thus a  substitution of \eqref{e:rhojrhok} into \eqref{e:divT0i1} gives
\begin{equation}\label{e:divT0i}
\nabla^* T^0(I_i \nabla f) = \frac{3}{7}\left(\frac{23}{5} dS(I_i \nabla f) + \frac{12}{5}(S-2)f_i + [\nabla_j^* T^0(I_k \nabla f)-\nabla_k^* T^0(I_j \nabla f)]\right).
\end{equation}

Now we write \eqref{e:NablaT01} for $j$ instead of  $i$ and then let $\alpha = k$ in the result. Then we use \eqref{e:gamma} to see that $\Gamma^j(I_k {e_\gamma}, {e_\gamma}) = -4\rho_i(I_j \nabla f, \xi_j)$ and thus by \eqref{e:rhokjj2}:
\begin{equation}\label{e:divT0kj}
\nabla_k^* T^0(I_j \nabla f) = -\frac{3}{5}dS(I_i \nabla f) - \frac{6}{5}(7-S)f_i + 6\nabla^2 f(\xi_j, \xi_k).  \end{equation}

Next, we do one more permutation of the indices and consider \eqref{e:NablaT01} for $k$ instead of  $i$  and then let $\alpha = j$, which taking into account $\Gamma^k(I_j {e_\gamma}, {e_\gamma}) = 4\rho_i(I_k\nabla f, \xi_k)$ and \eqref{e:rhokjj2} gives an identity for the remaining divergence
\begin{equation}\label{e:divT0jk}
\nabla_j^* T^0(I_k \nabla f) = \frac{3}{5}dS(I_i \nabla f) + \frac{6}{5}(7-S)f_i + 6 \nabla^2 f(\xi_k, \xi_j). \end{equation}

Therefore, subtracting \eqref{e:divT0kj} from \eqref{e:divT0jk} and applying \eqref{e:vertric} we come to
\begin{equation}\label{e:diffdiv1}
\nabla_j^* T^0(I_k \nabla f)-\nabla_k^* T^0(I_j \nabla f) = \frac{6}{5}[dS(I_i \nabla f) -4(S-2)f_i].
\end{equation}

Lastly, a substitution of \eqref{e:diffdiv1} into \eqref{e:divT0i} gives
\begin{equation}\label{e:divT0i2}
\nabla^*T^0(I_i \nabla f) = \frac{3}{7}\left( \frac{29}{5}dS(I_i \nabla f) - \frac{12}{5}(S-2)f_i\right)
\end{equation}
which after using it together with  \eqref{e:diviT0}  in \eqref{e:rhoikk1} shows \eqref{e:dSi}.
\end{proof}

\begin{lemma}\label{Sconst}
%With the assumptions of Theorem \ref{t:mainpan}, if $f$ satisfies \eqref{e:Hessf}, then
The normalized qc-scalar curvature $S$ is constant, in fact $S=2$. In particular,
\begin{equation}\label{Twhens=2}
T_{00} =0, \qquad T_{11} + T_{22} + T_{33} = 0,  \quad\text{and}\quad f_1T_{s1} + f_2 T_{s2} + f_3 T_{s3} = 0,\quad s= 1,2,3,
\end{equation}
and for any cyclic permutation $(i,j,k)$ of $(1,2,3)$ we have
\begin{equation}\label{e:f's and T's}
 f_k T_{ik} - f_i T_{kk} = f_i T_{jj} - f_j T_{ij}.
\end{equation}
\end{lemma}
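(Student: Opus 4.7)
The plan is to convert the identities $dS(\nabla f)=0$ and $dS(I_t\nabla f)=-2(S-2)f_t$ from \eqref{e:dSnabf} and \eqref{e:dSi} into an elliptic-type equation of the form $\Delta S = -\frac{2}{5}(S-2)(8S-1)$. Since the right-hand side is nonpositive by virtue of $S\geq 2$ (noted right after \eqref{e:Qpf}) and $8S-1\geq 15$, the strong maximum principle on the closed manifold $M$ will force $S\equiv 2$, after which the remaining assertions follow immediately from the tensor identities already in hand.

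To derive the equation for $\Delta S$, I would differentiate the two first-order identities along a horizontal $Y$. Using the Hessian \eqref{e:Hessf} in the form $\nabla_Y \nabla f = -fY - \sum_s f_s I_s Y$, differentiating $dS(\nabla f)=0$ directly yields
\[
\nabla^2 S(Y,\nabla f) = f\,dS(Y) + \sum_{s=1}^3 f_s\,dS(I_s Y).
\]
Differentiating $dS(I_i\nabla f)=-2(S-2)f_i$ is analogous but requires \eqref{e:Biq} (the covariant derivatives of $I_i$ and $\xi_i$, whose connection-form contributions cancel in pairs) and \eqref{e:nabla2f1} (to eliminate $\nabla^2 f(Y,\xi_i)$), yielding an analogous formula for $\nabla^2 S(Y,I_i\nabla f)$. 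Evaluating these formulas at $Y=I_\alpha\nabla f$ for $\alpha=0,1,2,3$, taking the horizontal trace $-|\nabla f|^{-2}\sum_\alpha \nabla^2 S(I_\alpha\nabla f, I_\alpha\nabla f)$ in the orthonormal frame from Lemma \ref{elliplem}, and simplifying the $T_{ii}$ contribution using \eqref{e:Taa} together with $T_{00}=-\frac{3}{5}(S-2)|\nabla f|^2$ from \eqref{e:T0Xf} should produce the claimed formula for $\Delta S$.

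The conclusion $S\equiv 2$ then follows either from the maximum principle applied to the nonnegative sub-Laplacian, or equivalently from the integration-by-parts identity
\[
\int_M |dS|_H^2\,{\Vol}_\eta = \int_M (S-2)\Delta S\,{\Vol}_\eta = -\frac{2}{5}\int_M (S-2)^2(8S-1)\,{\Vol}_\eta,
\]
which forces both sides to vanish. Once $S=2$, \eqref{e:T0Xf} gives $T^0(X,\nabla f)=0$, and in particular $T_{00}=0$; then \eqref{e:Taa} gives $T_{11}+T_{22}+T_{33}=0$, and \eqref{e:T0i0} reduces to $f_1T_{s1}+f_2T_{s2}+f_3T_{s3}=0$. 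Rearranging the case $s=i$ and using $T_{ii}+T_{kk}=-T_{jj}$ yields \eqref{e:f's and T's}.

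The main obstacle I anticipate is the algebraic bookkeeping in the horizontal trace of $\nabla^2 S$: one must verify that the $f_s^2$ terms in $\nabla^2 S(\nabla f,\nabla f) = -2(S-2)\sum_s f_s^2$ exactly cancel the mixed $f_j^2+f_k^2$ contributions coming from the $\alpha=i$ terms, and that the residual $T_{ii}$ pieces trade cleanly for a scalar multiple of $(S-2)|\nabla f|^2$ via the trace identities. This cancellation is what makes the right-hand side of $\Delta S$ factor as a polynomial in $S$ alone, independent of the undetermined individual components $T_{ij}$ and $f_s$.
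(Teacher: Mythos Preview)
Your approach is correct and genuinely different from the paper's. The paper first differentiates \eqref{e:nabS} to obtain a formula for the full horizontal Hessian $\nabla^2 S(Y,X)$, then uses the Ricci identity $\nabla^2 S(X,Y)-\nabla^2 S(Y,X)=-2\sum_t dS(\xi_t)\omega_t(X,Y)$ with $X=I_i\nabla f$, $Y=\nabla f$ to conclude $dS|_V=0$; it then exploits the trace $\nabla^2 S(e_\gamma,I_i e_\gamma)=-4dS(\xi_i)=0$ together with the explicit Hessian formula to force $dS(I_t\nabla f)=0$, hence $dS=0$. The value $S=2$ is finally pinned down by a contradiction argument: if $S\neq 2$ on an open set then \eqref{e:dSi} gives $f_t=0$ there, and \eqref{e:nabla2f1} plus \eqref{e:Taa} then yield $S=1/8$, violating $S\geq 2$. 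Your route is more economical: you trace only the diagonal of $\nabla^2 S$ in the $\{\sigma_\alpha\}$ frame, and the $\sum_s f_s^2$ contributions from $\alpha=0$ and $\alpha=1,2,3$ indeed cancel exactly as you anticipate, while $\sum_i T_{ii}=-T_{00}=\frac{3}{5}(S-2)|\nabla f|^2$ converts the residual torsion into a pure polynomial in $S$, yielding $\Delta S=-\frac{2}{5}(S-2)(8S-1)$. Integrating this (or, even more simply, integrating $\Delta S$ itself) over the closed manifold with $S\geq 2$ gives $S\equiv 2$ in one stroke, bypassing both the vertical step and the contradiction argument. The paper's method has the advantage of extracting $dS=0$ as an intermediate geometric fact; yours reaches the conclusion faster and with less bookkeeping. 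Your derivation of \eqref{Twhens=2} and \eqref{e:f's and T's} from $S=2$ is the same as the paper's.
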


\begin{proof}
First we will show that the differential of $S$ vanishes on the vertical space, $dS|_V = 0$. With \eqref{e:dSnabf} and \eqref{e:dSi} we can write the horizontal gradient of $S$ in the $\{\sigma_\alpha\}_{\alpha =0}^3$ frame in the form
\begin{equation}\label{e:nabS}
|\nabla f|^{2} \nabla S  =-2(S-2)\sum_{t=1}^3 f_t \,I_t\nabla f.
\end{equation}
The covariant derivative of \eqref{e:nabS} along a horizontal vector $Y$, using \eqref{e:Biq}, the horizontal Hessian equation \eqref{e:Hessf}, and \eqref{e:nabla2f1} for the term $\nabla^2 f(Y,\xi_i)$, gives the equation
\begin{multline}\label{e:HessS}
\frac{1}{2}|\nabla f|^2 \nabla^2 S(Y,X) = f df(Y)dS(X) + \sum_{t=1}^3 f_t [df(I_t Y) dS(X) + df(I_t X)dS(Y)] \\
+ (S-2) \sum_{t=1}^3\left[f_t\nabla^2 f(Y, I_t X) + \left(\frac{1}{5}(1+2S)df(I_t Y) - \frac{2}{3}T^0(Y, I_t\nabla f) \right)df(I_t X)\right].
\end{multline}
Using \eqref{e:Hessf} again, the identities in \eqref{e:dSnabf} and \eqref{e:dSi} we find $\nabla^2 S(I_i \nabla f, \nabla f) = \nabla^2 S(\nabla f, I_i \nabla f)$. Hence, by the Ricci identity $\nabla^2 S(X,Y) - \nabla^2 S(Y,X) = -2 \sum_{t=1}^3 {dS(\xi_t)}{\omega_t}(X,Y) $ we have
\[
-2\sum_{t=1}^3 dS(\xi_t)\omega_t(I_i \nabla f, \nabla f) =\nabla^2 S(I_i \nabla f, \nabla f) - \nabla S(\nabla f, I_i \nabla f) =0,
\]
which implies
\begin{equation}\label{e:dSxi}
dS(\xi_t) =0,\qquad t= 1,2,3.
\end{equation}

Now we can show that the differential of $S$ vanishes on all horizontal vectors as well,  $dS|_H=0$. From \eqref{vderiv} and \eqref{e:dSxi} we find $\nabla^2 S({e_\gamma}, I_i {e_\gamma}) = 0$. On the other hand, using \eqref{e:HessS} with \eqref{e:dSi} we also have
\[
|\nabla f|^2 \nabla^2 S({e_\gamma}, I_i {e_\gamma}) = - 2f dS(I_i \nabla f).
\]
Thus, since $f \neq 0$ a.e., see Lemma \ref{elliplem}, we conclude
\begin{equation}\label{e:dSt}
dS(I_t\nabla f) = 0\qquad t=1,2,3.
\end{equation}
Hence, since in addition we have $dS(\nabla f) = 0$ by \eqref{e:dSnabf},  it follows that $dS|_H=0$. Therefore, taking  into account that $dS$ vanishes on the Reeb vector fields as proven above, it follows that $dS = 0$ and hence  $S$ is constant.

In order to determine the constant we note that from \eqref{e:dSi},  either $S= 2$ or $f_1 = f_2 = f_3 = 0$ on some open set. Arguing by contradiction, suppose the latter, then for any horizontal vector $X$ we would have, by \eqref{e:Biq} and the assumption $f_s=0$, the idenity
\[
0 = X{f_i} = \nabla^2 f(X, \xi_i) - \alpha_j(X)f_k + \alpha_k(X)f_j= \nabla^2 f(X,\xi_i).
\]
Then it would follow from \eqref{e:nabla2f1} that
$10\,T_{ii} = -3(1+2S)|\nabla f|^2$. On the other hand, the component  $T_{00}$ can be computed from  \eqref{e:T0Xf}, which gives $T_{00}=-\frac 35(S-2)|\nabla f|^2$. Therefore, by \eqref{e:Taa} we have
\[
0 = \sum_{\alpha = 0}^3 T_{\alpha\alpha} = \frac{3}{10}(1-8S)|\nabla f|^2,
\]
hence, $S=1/8$. This is a contradiction since the Lichnerowicz-type bound \eqref{e:Lich} implies, due to $T^0$ being a trace-free tensor, that $S \geq 2$. Thus we must have $S=2$, and consequently \eqref{e:T0Xf} now implies $T_{00}=0$. With this, \eqref{Twhens=2} follows from \eqref{e:Taa} and \eqref{e:T0i0}.

Finally, a substitution of the second identity in \eqref{Twhens=2} into the third one  written for $s=i$ shows
\[
0 = f_iT_{ii} + f_j T_{ij} + f_k T_{ik} = f_i (-T_{jj} - T_{kk}) + f_j T_{ij} + f_k T_{ik}
\]
from which \eqref{e:f's and T's} follows.
\end{proof}

\subsection{Vanishing of the torsion}
The last application of \eqref{e:NablaT} is to finally show that $T^0 = 0$. We begin with a simple lemma  describing the consequences of $S=2$ on the components of the divergences $\nabla_i^* T^0(X){=} \nabla T^0({e_\gamma}, I_i {e_\gamma}, X)$ defined in \eqref{e:def of div}.

\begin{lemma}
%With the assumptions of Theorem \ref{t:mainpan}, if $f$ satisfies \eqref{e:Hessf} then
The divergences of the torsion satisfy the following identities,
\begin{equation}\label{e:f1}
|\nabla f|^2 \nabla_i^* T^0(I_i \nabla f) = -4[fT_{ii} + f_k T_{ij} - f_j T_{ki}]
\end{equation}
\begin{equation}\label{e:f2}
|\nabla f|^2 \nabla_j^*T^0(I_k\nabla f) = |\nabla f|^2 \nabla_k^*T^0(I_j\nabla f) = -4[f T_{jk} + f_j T_{ij} - f_i T_{jj}].
\end{equation}
\end{lemma}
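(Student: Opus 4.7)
The plan is to obtain \eqref{e:f1} from the trace identity \eqref{e:NablaT01} at $\alpha = i$, and to obtain \eqref{e:f2} from the already-derived formulas \eqref{e:divT0kj}, \eqref{e:divT0jk} and \eqref{e:diffdiv1}, in both cases specializing to $S = 2$, $T_{00}=0$ and $dS = 0$ provided by Lemma \ref{Sconst}, and then substituting the vertical Hessian identities \eqref{e:vHessfii}--\eqref{e:vHessfij}.

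For \eqref{e:f1}, I would set $\alpha = i$ in \eqref{e:NablaT01}. The left-hand side becomes $-8\nabla_i^{*}T^0(I_i\nabla f)$ via the skew identity $\nabla T^0(I_i e_\gamma, e_\gamma,\cdot) = -\nabla_i^{*}T^0(\cdot)$ used earlier in the excerpt. On the right-hand side, under $S=2$ and $dS=0$ every term involving $dS$ drops, and the trace identities $\sum_\gamma g(I_se_\gamma,I_te_\gamma) = 4\delta_{st}$ and $\sum_\gamma g(I_sI_te_\gamma,e_\gamma) = -4\delta_{st}$ (which follow from $I_aI_b=\pm I_c$ being traceless for $a\neq b$ and $I_s^2=-\mathrm{id}$) give $f\omega_i(e_\gamma,I_ie_\gamma) = 4f$, $\sum_s f_s\omega_s(I_ie_\gamma,I_ie_\gamma) = 0$, and $\sum_s \nabla^2 f(\xi_i,\xi_s)\omega_s(I_ie_\gamma,e_\gamma) = -4\nabla^2 f(\xi_i,\xi_i)$. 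The same trace argument kills the two $\omega$-based terms of $\Gamma^i(I_ie_\gamma,e_\gamma)$, while the four remaining $df$-terms in \eqref{e:gamma} cancel in pairs, so $\Gamma^i(I_ie_\gamma,e_\gamma)=0$ (as already observed in the excerpt). What remains reduces to $\nabla_i^{*}T^0(I_i\nabla f) = 6f + 6\nabla^2 f(\xi_i,\xi_i)$; multiplying by $|\nabla f|^2$ and substituting \eqref{e:vHessfii} at $S=2$ produces \eqref{e:f1} (the $6|\nabla f|^2 f$ terms cancel and the remaining bracket appears with coefficient $6\cdot(-\tfrac{2}{3}) = -4$).

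For \eqref{e:f2}, I would simply specialize \eqref{e:divT0kj} and \eqref{e:divT0jk} to $S=2$, $dS=0$, producing $\nabla_k^{*}T^0(I_j\nabla f) = -6f_i + 6\nabla^2 f(\xi_j,\xi_k)$ and $\nabla_j^{*}T^0(I_k\nabla f) = 6f_i + 6\nabla^2 f(\xi_k,\xi_j)$. Identity \eqref{e:diffdiv1} collapses to $\nabla_j^{*}T^0(I_k\nabla f) = \nabla_k^{*}T^0(I_j\nabla f)$, which is the first equality of \eqref{e:f2}. To obtain the explicit form, I would apply the cyclic permutation $(i,j,k) \to (j,k,i)$ to \eqref{e:vHessfij} to express $|\nabla f|^2\nabla^2 f(\xi_j,\xi_k)$, then use \eqref{e:f's and T's} to rewrite $f_k T_{ik} - f_i T_{kk}$ as $f_i T_{jj} - f_j T_{ij}$; this is exactly what makes the coefficients $-\tfrac{11}{12}$ and $-\tfrac{1}{4}$ combine to the uniform factor $-\tfrac{2}{3}$ in front of $[f_j T_{ij} - f_i T_{jj}]$. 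After the $6f_i$ terms cancel against the $|\nabla f|^2 f_i$ contribution from the permuted \eqref{e:vHessfij}, the result is the claimed $-4[fT_{jk} + f_jT_{ij} - f_iT_{jj}]$.

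The main obstacle is purely bookkeeping: carefully tracking signs in the $I_s$-traces, verifying the pairwise cancellation of the four $df$-terms in $\Gamma^i(I_ie_\gamma,e_\gamma)$, and recognizing that \eqref{e:f's and T's} together with $T_{00}=0$ is precisely the relation needed to collapse the vertical Hessian expressions into the clean uniform factor $-4$ rather than a more complicated combination of components of $T^0$.
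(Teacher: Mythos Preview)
Your proposal is correct and follows essentially the same route as the paper: for \eqref{e:f1} the paper also takes $\alpha=i$ in \eqref{e:NablaT01}, uses $\Gamma^i(I_ie_\gamma,e_\gamma)=0$ to reduce to $\nabla_i^{*}T^0(I_i\nabla f)=6[f+\nabla^2 f(\xi_i,\xi_i)]$, and then substitutes \eqref{e:vHessfii}; for \eqref{e:f2} the paper likewise specializes \eqref{e:diffdiv1} and \eqref{e:divT0kj} at $S=2$, permutes \eqref{e:vHessfij} cyclically, and uses \eqref{e:f's and T's} to merge the $-\tfrac{11}{12}$ and $-\tfrac{1}{4}$ coefficients into the uniform $-\tfrac{2}{3}$ exactly as you describe.
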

\begin{proof}
Since $S=2$ by Lemma \ref{Sconst}, equation \eqref{e:diffdiv1} implies $\nabla_kT^0(I_j \nabla f) = \nabla_j T^0(I_k\nabla f)$, which gives the first equality in \eqref{e:f2}. Furthermore, \eqref{e:vHessfij} now takes the simpler form
\begin{equation}\label{e:vHessfjkS}
12|\nabla f|^2\nabla^2 f(\xi_j, \xi_k) = -8fT_{jk} + 12|\nabla f|^2f_i - 11f_j T_{ij} +11 f_i T_{jj} - 3f_k T_{ik} +3 f_i T_{kk}.
\end{equation}
Applying \eqref{e:f's and T's} to \eqref{e:vHessfjkS} we find
\begin{equation}\label{e:vHessfjkS2}12|\nabla f|^2 \nabla^2 f(\xi_j,\xi_k) = -8fT_{jk} + 12 |\nabla f|^2 f_i - 8 f_j T_{ij} + 8 f_i T_{jj}. \end{equation}
Substituting \eqref{e:vHessfjkS2} into \eqref{e:divT0kj} yields the second equality of \eqref{e:f2}.

Finally, let $\alpha = i$ in \eqref{e:NablaT01}, which due to $ \nabla_\alpha^*T^0(X) = -\nabla T^0(I_\alpha {e_\gamma}, {e_\gamma}, X)$ takes the form (for $i$ fixed)
\begin{multline}
-8\nabla_i^*T^0(I_i\nabla f)=5\nabla T^0(I_i e_\gamma, e_\gamma, I_i \nabla f) - 3 \nabla T^0(e_\gamma, I_i e_\gamma, I_i \nabla f)\\
= -48 f - 48\nabla^2 f(\xi_i, \xi_i)
 - 12 \Gamma^i(I_i e_\gamma, e_\gamma)=-48\left[f + \nabla^2 f(\xi_i, \xi_i) \right],
\end{multline}
using $\Gamma^i(I_i {e_\gamma}, {e_\gamma}) = 0$ from \eqref{e:gamma}. An application of  \eqref{e:vHessfii} to the last  equation gives \eqref{e:f1}.
\end{proof}

In the last lemma needed for the proof of the main theorem we derive the key relation between the components of the torsion tensor. We continue the use of  the notation $T_{ij}$ for the  components of the torsion set in \eqref{sigalph}.

\begin{lemma} %With the assumptions of Theorem \ref{t:mainpan}, if $f$ satisfies \eqref{e:Hessf} then
The next identities hold at almost every point,
\begin{equation}\label{e:fTjk}
fT_{jk} = \frac{1}{4}[f_i T_{kk} - f_k T_{ki}]=\frac14[f_j T_{ij} - f_i T_{jj}]
\end{equation}
\begin{equation}\label{e:fTii}
fT_{ii} = \frac{1}{4}[f_k T_{ij} - f_j T_{ki}].
\end{equation}\end{lemma}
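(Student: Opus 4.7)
The plan is to evaluate the master identity \eqref{e:NablaT} at specific choices of horizontal vectors built from $\nabla f$ and its $I_\alpha$-rotations, eliminating the $\rho$-factors in $\Gamma^i(Y,X)$ via \eqref{e:2.12} together with the divergence formulas \eqref{e:f1}, \eqref{e:f2} (and the vanishing $\nabla^*T^0(\nabla f)=\nabla^*T^0(I_i\nabla f)=0$ valid under $S=2$, $dS=0$), and eliminating the vertical Hessian terms via \eqref{e:vHessfii}, \eqref{e:vHessfij}. This yields new algebraic identities among the components $T_{\alpha\beta;\gamma}$ and the quantities $\{fT_{\alpha\beta},\;f_\gamma T_{\delta\epsilon}\}$, which combined with \eqref{e:f1} and \eqref{e:f2} produce the desired equalities.

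For \eqref{e:fTii}, evaluate \eqref{e:NablaT} at $(Y,X)=(I_j\nabla f,I_k\nabla f)$ and split the resulting equation into its antisymmetric and symmetric parts under $Y\leftrightarrow X$. The antisymmetric part, after the simplifications above, coincides with \eqref{e:f1} expanded in the frame $\{\sigma_\alpha\}$ and so gives
\[
T_{ik;j}-T_{ij;k}=-3fT_{ii}+\tfrac{5}{2}(f_jT_{ik}-f_kT_{ij}),
\]
while the symmetric part \eqref{e:NablaTsym}, in which the crucial sum $\rho_j(I_k\nabla f,\xi_i)+\rho_k(I_j\nabla f,\xi_i)$ is expressed through \eqref{e:f1} and \eqref{e:f2} using the appropriate cyclic instances of \eqref{e:2.12}, yields the genuinely new identity
\[
T_{ik;j}+T_{ij;k}=3f(T_{jj}-T_{kk})-9f_iT_{jk}-\tfrac{7}{2}(f_jT_{ik}+f_kT_{ij}).
\]
Writing these two identities for each of the three cyclic permutations of $(i,j,k)$ gives an over-determined linear system for the three independent components $T_{12;3},T_{13;2},T_{23;1}$. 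After using $T_{00}=0$ and $T_{ii}+T_{jj}+T_{kk}=0$ from \eqref{Twhens=2}, the resulting consistency condition collapses to precisely $4fT_{ii}=f_kT_{ij}-f_jT_{ki}$, which is \eqref{e:fTii}.

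For \eqref{e:fTjk}, we compute the component $T_{ij;j}$ in two independent ways and equate. First, evaluate \eqref{e:NablaTsym} at $(Y,X)=(I_j\nabla f,I_j\nabla f)$: the only surviving $\rho$ contribution is $\rho_k(I_k\nabla f,\xi_i)$, which by \eqref{e:2.12} and \eqref{e:f2} is a multiple of $f T_{jk}+f_j T_{ij}-f_i T_{jj}$, and $T_{jk;0}$ is given by \eqref{e:Tij0}; this yields $2T_{ij;j}$ explicitly. Second, expand $|\nabla f|^2\nabla_k^*T^0(I_j\nabla f)$ in the $\{\sigma_\alpha\}$ frame as $T_{jk;0}+T_{jj;i}-T_{ij;j}-T_{0j;k}$, compute $T_{0j;k}$ via \eqref{e:nablaT0}, and substitute \eqref{e:f2} to obtain $T_{jj;i}-T_{ij;j}=-3fT_{jk}-\tfrac{5}{2}(f_jT_{ij}-f_iT_{jj})$; then use \eqref{e:NablaT} with the fixed index cyclically renamed to $j$, evaluated at $(Y,X)=(I_i\nabla f,I_j\nabla f)$, to obtain $5T_{jj;i}-3T_{ij;j}$, whence a second formula for $T_{ij;j}$ emerges. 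Equating the two expressions and reducing by \eqref{Twhens=2} produces $4fT_{jk}=f_jT_{ij}-f_iT_{jj}$, which via \eqref{e:f's and T's} is equivalent to both forms of \eqref{e:fTjk}. The main obstacle is the bookkeeping: one must correctly identify the cyclic ordering of $(a,b,c)$ in each $\rho_a(\xi_b,I_cX)$ invocation of \eqref{e:2.12}, carefully track the signs of the quaternionic products $I_\alpha I_\beta=\pm I_\gamma$, and verify that all $|\nabla f|^2$-proportional contributions cancel so that only the desired algebraic identity in the $fT_{\alpha\beta}$ and $f_\gamma T_{\delta\epsilon}$ survives.
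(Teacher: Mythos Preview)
Your proposal is correct and rests on the same mechanism as the paper's proof: both arguments compute certain components $T_{\alpha\beta;\gamma}$ in two ways by evaluating \eqref{e:NablaT} at suitable pairs $(Y,X)\in\{I_s\nabla f\}$ with two different choices of the fixed cyclic index, and then invoke the first--two--slot symmetry $T_{\alpha\beta;\gamma}=T_{\beta\alpha;\gamma}$ to extract the algebraic relations. Your intermediate formulas (the antisymmetric and symmetric evaluations at $(I_j\nabla f,I_k\nabla f)$, and the expansion $|\nabla f|^2\nabla_k^*T^0(I_j\nabla f)=T_{jk;0}+T_{jj;i}-T_{ij;j}-T_{0j;k}$) are all right.

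The only difference is organizational. The paper first combines $5A(Y,X)+3A(X,Y)$ with $A$ the right side of \eqref{e:NablaT} to obtain a single closed formula \eqref{e:NablaTaib} for $8\nabla T^0(Y,X,I_i\nabla f)$, and then applies the symmetry directly: for \eqref{e:fTjk} it computes $8T_{ji;j}$ from \eqref{e:NablaTaib} with index $i$ at $X=Y=I_j\nabla f$ and $8T_{ij;j}$ from \eqref{e:NablaTaib} with index $j$ at $(Y,X)=(I_j\nabla f,I_i\nabla f)$; for \eqref{e:fTii} it computes $8T_{kj;i}$ and $8T_{jk;i}$ from \eqref{e:NablaTaib} with indices $j$ and $k$. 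Your route keeps the symmetric/antisymmetric pieces of \eqref{e:NablaT} separate and, for the second expression of $T_{ij;j}$, detours through the frame expansion of $\nabla_k^*T^0(I_j\nabla f)$ together with one more evaluation of \eqref{e:NablaT}. This is a legitimate but slightly longer path; the paper's device of passing once to \eqref{e:NablaTaib} makes the two required evaluations and the subtraction more transparent.
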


\begin{proof}
First, let us dispose with the trivial case, by noting that the second identity in \eqref{e:fTjk} follows directly from \eqref{e:f's and T's}.

We turn to the proof of the first equality in \eqref{e:fTjk}. Let $A(Y,X)$ denote the tensor in the left-hand side of \eqref{e:NablaT}. Therefore,
$$16\nabla T^0(Y,X, I_i \nabla f)=5A(Y,X)+3A(X,Y).$$
Taking into account that the scalar curvature is constant and $T_{00} = 0$ the above  equation takes the following explicit form
\begin{multline}\label{e:NablaTaib}
8\nabla T^0(Y,X, I_i \nabla f) = -12\nabla T^0(\nabla f, I_iX,Y) -12 \nabla T^0(\nabla f,X, I_iY)  +48g(X,Y) f_i\\
 - \sum_{s=1}^3 f_s[30g(I_iX, I_sY) + 18g(I_s X, I_iY)] +12 \sum_{s=1}^3 \nabla^2 f(\xi_i, \xi_s)g(X,I_sY)\\
 + 8f T^0(X, I_iY)  +12fg(X, I_i Y)+ f_i[24T^0(I_iX, I_iY) - 32 T^0(X,Y)]\\
   + f_j[17T^0(X, I_kY) + 15T^0(I_jX, I_iY) + 9T^0(I_iX, I_jY) + 15 T^0(I_kX,Y)] \\
    + f_k [15T^0(I_kX, I_iY) + 9 T^0(I_iX, I_kY) - 17T^0(X, I_jY) - 15 T^0(I_j X, Y)]
    -  48\Gamma^i(X,Y).
\end{multline}

Now, let $X=Y = I_j\nabla f$ in \eqref{e:NablaTaib} and use \eqref{e:T0i} to obtain
\[
8T_{ji:j} = -24 T_{jk;0} +8fT_{jk} + f_i[24T_{kk} - 32T_{jj}] - 32f_j T_{ij} - 24f_k T_{ik} - 48\Gamma^i(I_j \nabla f, I_j \nabla f).
\]

Next, consider \eqref{e:NablaTaib} written for $j$, and then let $X = I_i\nabla f$, $Y = I_j\nabla f$. Using \eqref{Twhens=2}and  $T_{i0;0} = 0$ by \eqref{e:T0i00}  it follows
\begin{multline*}
8T_{ij;j} = 12T_{kj;0} +12|\nabla f|^2 f_i - 12|\nabla f|^2\nabla^2 f(\xi_j, \xi_k) - 32 f_j T_{ij} + 26f_k T_{ik} + f_i[-9T_{kk} + 17 T_{ii} - 15 T_{jj}] \\
- 48\Gamma^j(I_i \nabla f, I_j \nabla f).
\end{multline*}
By the symmetry of $T_{\alpha\beta;\gamma}$ in its first two indices and the above identities for $T_{ji;j}$ and $T_{ij;j}$ we have
\begin{multline}\label{e:l1}
0 = 8T_{ji;j} - 8T_{ij;j} =-36T_{jk;0}  + 8fT_{jk} + 33 f_i T_{kk} -17f_i T_{jj}  - 50 f_k T_{ki} - 17 f_i T_{ii}
- 12|\nabla f|^2 f_i \\
+ 12|\nabla f|^2 \nabla^2 f(\xi_j, \xi_k)  - 48[\Gamma^i(I_j\nabla f, I_j \nabla f) - \Gamma^j(I_i \nabla f, I_j \nabla f)].
\end{multline}
Since $S=2$, \eqref{e:divT0} and \eqref{e:divT0i2} imply $\nabla^*T^0 = 0$. Therefore by the last lines in \eqref{e:2.12}, \eqref{e:2.10} we  have
\begin{equation}\label{e:rhoS}
\rho_k(I_j X, \xi_i) = - \rho_k(I_i X, \xi_j) =  \frac{1}{12}\nabla_k^*T^0(I_k X).
\end{equation}
The definition of $\Gamma^i(Y,X)$ in \eqref{e:gamma}, together with \eqref{e:rhoS} and \eqref{e:f2} show that
\[
\Gamma^i(I_j \nabla f, I_j \nabla f) - \Gamma^j(I_i \nabla f, I_j \nabla f) = -\frac{1}{4}|\nabla f|^2 \nabla_k^*T^0(I_j\nabla f) =  fT_{jk} + f_j T_{ij} - f_i T_{jj},
\]
which gives a formula for the last term in \eqref{e:l1}. The latter, together with  the identities \eqref{e:Tij0} for the term $T_{jk;0} $  and \eqref{e:vHessfjkS} for the term $|\nabla f|^2 \nabla^2 f(\xi_j, \xi_k)$, allows to rewrite \eqref{e:l1} as follows
\begin{multline}
0 =-9[ f_i(T_{jj} - T_{kk}) + f_k T_{ik} - f_j T_{ij}] + 8fT_{jk} + 33 f_i T_{kk} -17f_i T_{jj}  - 50 f_k T_{ki} - 17 f_i T_{ii}
\\- 12|\nabla f|^2 f_i
 -8fT_{jk} + 12 |\nabla f|^2 f_i - 8 f_j T_{ij} + 8 f_i T_{jj}- 48[ fT_{jk} + f_j T_{ij} - f_i T_{jj}] \\ = 30f_i T_{jj} + 42 f_i T_{kk} - 17 f_i T_{ii} -47f_j T_{ij} - 59f_k T_{ik} - 48f T_{jk} .
\end{multline}
From \eqref{Twhens=2} we have that $-17f_i T_{ii} =17 f_j T_{ij} + 17 f_k T_{ik} $, therefore the above reads
\begin{equation}\label{m1}
0   = 30 f_i T_{jj} +42 f_i T_{kk} - 30 f_j T_{ij} - 42 f_k T_{ik} -  48f T_{jk}.
\end{equation}
In addition, \eqref{Twhens=2} also gives
\begin{equation}\label{m2}
f_j T_{ij} - f_i T_{jj} = f_j T_{ij} + f_i T_{ii} + f_i T_{kk} = f_i T_{kk} - f_i T_{ik}.
\end{equation}
Applying this to \eqref{m1} shows
\[
0 = 30 f_i T_{jj} +42 f_i T_{kk} - 30 f_j T_{ij} - 42 f_k T_{ik} -  48f T_{jk} = 12 f_i T_{kk} - 12 f_k T_{ik}  - 48 fT_{jk}
\]
from which the first identity in \eqref{e:fTjk} follows.

We turn to the proof of \eqref{e:fTii}. Choosing $X$ and $Y$ in the obvious ways, equation \eqref{e:NablaTaib} written for $j$ and $k$, respectively, implies the following identities
\begin{multline*}
8T_{kj;i} = 12T_{kk;0} - 12T_{ii;0} - 12|\nabla f|^2 \nabla^2 f(\xi_j, \xi_j)- 8fT_{kk}  -12|\nabla f|^2 f
 - 56f_j T_{ki} - 6 f_k T_{ij} - 2 f_i T_{jk}\\ - 48\Gamma^j(I_k\nabla f, I_i \nabla f)
\end{multline*}
and
\begin{multline*}
8T_{jk;i} = 12T_{ii;0}- 12 T_{jj;0} + 12|\nabla f|^2 \nabla^2 f(\xi_k, \xi_k) +8fT_{jj} + 12|\nabla f|^2 f
- 56 f_k T_{ij} - 2 f_i T_{jk} - 6 f_j T_{ki} \\
- 48\Gamma^k(I_j\nabla f, I_i \nabla f) .
\end{multline*}
Therefore, we have
\begin{multline}\label{e:l2}
0 = 8T_{kj;i} - 8T_{jk;i} = 12[T_{kk;0} - 2T_{ii;0} + T_{jj:0}] - 12|\nabla f|^2[\nabla^2 f(\xi_j, \xi_j) + \nabla^2 f(\xi_k,\xi_k)] \\
- 24|\nabla f|^2 f - 8f[T_{kk} + T_{jj}] - 50[f_j T_{ki} -f_k T_{ij}] -48[\Gamma^j(I_k\nabla f, I_i\nabla f) - \Gamma^k(I_j\nabla f, I_i\nabla f)].
\end{multline}
By the definition of $\Gamma^i$ in \eqref{e:gamma},  followed by the identity \eqref{e:rhoS} for $\rho_s$, and \eqref{e:f1}, we find
\begin{multline}
\Gamma^j(I_k\nabla f, I_i \nabla f) - \Gamma^k(I_j\nabla f, I_i \nabla f)  = \frac{1}{12}|\nabla f|^2 \nabla_k^*T^0(I_k\nabla f) \\-\frac{1}{6}|\nabla f|^2 \nabla_i^*T^0(I_i\nabla f) + \frac{1}{12}|\nabla f|^2 \nabla_j^*T^0(I_j\nabla f) = fT_{ii} - f_j T_{ki} + f_k T_{ij}.
\end{multline}
\noindent Then, using the above along with \eqref{e:Tii0} and \eqref{e:vHessfii} in \eqref{e:l2} gives
\begin{multline}
0 = 12\left[\frac{1}{2}(f_i T_{jk} - f_j T_{ij}) - (f_j T_{ki} - f_k T_{ji}) + \frac{1}{2}(f_k T_{ij} - f_i T_{kj})\right] \\-12\left[-|\nabla f|^2 f - \frac{2}{3}(fT_{jj} + f_i T_{jk} - f_k T_{ij}) - |\nabla f|^2 f - \frac{2}{3}(f T_{kk} + f_j T_{ki} - f_i T_{jk})\right] - 24|\nabla f|^2 f \\ - 8f[T_{kk} + T_{jj}] - 50[ f_j T_{ki} - f_k T_{ij}] - 48[fT_{ii} - f_j T_{ki} + f_k T_{ij}] \\ = -12f_j T_{ki} + 12 f_k T_{ij} - 48 f T_{ii}
\end{multline}
from which \eqref{e:fTii} follows.
\end{proof}

\subsection{Proof of Theorem \ref{t:mainpan} } With the notation set in \eqref{sigalph} we have $T_{00} = T_{0i} =0$, see \eqref{Twhens=2} and \eqref{e:T0i}, hence
\begin{equation}\label{normT0}
|\nabla f|^4|T^0|^2 = T_{11}^2 + T_{22}^2 + T_{33}^2 + 2T_{12}^2 + 2T_{23}^2 + 2T_{31}^2
=\sum_{(i\,j\,k)} [T_{ii}^2 + 2T_{ij}^2]=\sum_{(i\,j\,k)} [T_{ii}^2 + 2T_{jk}^2],
 \end{equation}
 recalling that $\sum_{(i\,j\,k)}$ indicates a cyclic sum. Using the identities $4fT_{jk} = f_i T_{kk} - f_k T_{ki}=f_j T_{ij} - f_i T_{jj}$ and $4fT_{ii} = f_k T_{ij} - f_j T_{ki}$ by \eqref{e:fTjk} and \eqref{e:fTii}, we obtain
\begin{multline}
4f|\nabla f|^4|T^0|^2 =\sum_{(i\,j\,k)} \left[T_{ii}\left( f_kT_{ij}-f_jT_{ki}\right) + T_{jk}\left( f_i T_{kk} - f_k T_{ki}\right)  + T_{jk}\left(f_j T_{ij} - f_i T_{jj}\right) \right]\\
=\sum_{(i\,j\,k)} \left[  f_kT_{ii}T_{ij}-f_jT_{ii}T_{ki} +  f_i T_{jk}T_{kk} - f_kT_{jk} T_{ki}  + f_j T_{jk}T_{ij} - f_i T_{jk}T_{jj} \right]\\
=\sum_{(i\,j\,k)} \left[  f_kT_{ii}T_{ij}-f_kT_{jj}T_{ij} +  f_k T_{ij}T_{jj} - f_kT_{jk} T_{ki}  + f_k T_{ki}T_{jk} - f_k T_{ij}T_{ii} \right]=0.
\end{multline}
By  Lemma \ref{elliplem}  it follows $T^0 \equiv 0$. Thus,  $M$ is a qc-Einstein structure. The conclusion that $(M,\eta)$ is qc-equivalent to the standard $3$-Sasakian sphere then follows from the second part of \cite[Theorem $8.3$]{IV15}.

\end{document}